\newtheorem{thm}{Theorem}
\newtheorem{prop}{Proposition}
\newtheorem{cor}{Corollary}
\newtheorem{de}{Definition}
\newenvironment{proof}{
                        \noindent{\bf\small Proof: }\small}
                                       {\hfill {$\mathbf \Box$}\medskip}
\newcommand{\K}{\mathbb{K}}
\newcommand{\N}{\mathbb{N}}
\newcommand{\R}{\mathbb{R}}
\newcommand{\I}{\mathrm{I}}
\newcommand{\n}{\mathfrak{n}}
\newcommand{\g}{\mathfrak{g}}
\title{Index of a Borel Lie Algebra
}
\author{
\\Toukaiddine Petit\footnote{{\tt  toukaiddine.petit@icloud.com}. 
}}
\date{09/01/2024}
\begin{document}

\maketitle
\begin{abstract}
We compute the index of a Lie Borel Lie Algbra of a simple Lie algebra.

\end{abstract}

\section{Introduction}
The classification of Lie algebras for fixed dimensional is a difficul problem. This problem implies that we compare  Lie algebras. The using the invariant of Lie algebras help to compare two Lie algebras. The index of Lie algebra is an invariant. The main this paper is to compute the index of a Borel Lie algebra of a Simple Lie Algebra over the filed $\R$. In the case the simple Lie Algebras are $B_r,C_r,D_{2r},E_8,E_7,F_4,G_2$, the index of a Borel Lie algebra is $0$. In this case we say that the Lie algebra is Frobenuis.
\section{Maximal set of Srongly Orthogonal Roots}
 Let $\g$ be a semisimple Lie Algebra over $\R$. We fix $\bold{h}$ a Cartan subalgebra of $\g$. Let $\bold{h}^*$ the dual space of $\bold{h}$. 
Let $\Delta$ (resp.; $\Delta_+, \Delta_{-}$) be the set of roots (resp. positive, negative). Let $ W(\Delta)$ be the Weyl group. Let 
 $\pi=\lbrace\alpha_1,\alpha_2,...,\alpha_r\rbrace$ be the simple of roots associated with $(\Delta,\bold{h})$. Let $H_1,..,H_r$ be the simple coroots.
Let $\alpha\in \Delta$ a root and
 $$\g^{\alpha}=\lbrace X\in\g:[X,H]=\alpha(H)X,\forall \in\bold{h} \rbrace$$
  the eigenspace of $\alpha$.  The dimensional of the vector space $\g^{\alpha}$ is one. \\
 
We put $$\n_+=\oplus_{\alpha\in\Delta_+}\g^{\alpha},\n_{-}=\oplus_{\alpha\in\Delta_{-}}\g^{\alpha},
\bold{b}=\bold{h}\oplus\n_+$$
The subalgebra $\bold{b}$ is called a Borel Lie algebra. The Lie algebra $\g$ can be decomposed as
$\bold{g}=\bold{h}\oplus\n_+\oplus\n_{-}$
For each $i\in\lbrace1,\dots,r\rbrace$, we choose vectors $X_i\in\g^{\alpha_i}$ and $Y_i\in\g^{-\alpha_i}$ such that
$[X_i,Y_i]=H_i $. We put $n(i,j)=\alpha_i(H_j)$. We have $n(i,j)=0$ if $i\neq j$ and $n(i,i)=2$. The matrix formed by the number $(n(i,j))$ is called Cartan matrix.
The Weyl basis of $\g$ formed by the vectors $(X_i,Y_i,H_i)$ verified the following properties
$$[H_i,H_j]=0,[X_i,Y_i]=H_i,[H_i,X_j]=n(i,j)X_j,[H_i,Y_j]=-n(i,j)Y_j,$$
$$ [X_i,Y_j]=0 , i\neq j,$$
where $(X_i)$ is a basis of $\n_+$, $(Y_j)$ is a basis of $\n_{-}$.
\\ We have also if $\alpha,\beta\in\Delta$ such that $\alpha+\beta\in\Delta$, then $[\alpha,\beta]=N_{\alpha,\beta}X_{\alpha+\beta}$
\\ where $N_{\alpha,\beta}=N_{-\alpha,-\beta}$ and $N_{\alpha,\beta}=q(1-p)\alpha(H_\alpha)/2$ where $p$ and $q$ are difined as $\beta+k\alpha$ is root if $k\in[p,q]$ and $\beta+k\alpha$ is not root if $k\notin[p,q]$.
\\
We define a partiel relation ordre on $\Delta$ as $\alpha>\gamma$ if $\alpha-\gamma=\sum n_i\alpha_i\in\Delta_+$\\
Let $(,)$ be an invariant nondegerete symmetric form of $\g^*$. 
\begin{prop}If $\Delta$ is irrecducible (i.e $\g$ is simple Lie Algebra), then
\begin{enumerate}
\item
there is the highest root $\beta$ of $\Delta$ satisfying $\beta=\sum n_i\alpha_i\in\Delta_{+}$ with $n_i\geqslant 1$; and $(\beta,\alpha_i)\geqslant 0$, for all $i$.
\item
we have $(\beta,\beta)\geqslant (\alpha,\alpha)$ for any $\alpha\in\Delta$; 
\item 
we have $n(\beta,\alpha)=2(\alpha,\beta)/(\alpha,\alpha)=-1,0,1$ for any $\alpha\in\Delta$  $\alpha\neq\pm\beta$;
\item  if $(\alpha,\beta)>0$ then $\beta-\alpha$ is a root, for any $\alpha\in\Delta_{+}$, $\alpha\neq\beta$;
\item if $(\alpha,\beta)<0$ then $\beta+\alpha$ is a root, for any $\alpha\in\Delta_{-}$, $\alpha\neq-\beta$.
\end{enumerate}
\end{prop}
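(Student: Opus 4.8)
The plan is to treat the five assertions in the order stated, with the bulk of the work going into the existence and dominance of $\beta$ (item~1) and its maximal length (item~2); items~3--5 will then drop out from standard root-string considerations. First I would record that $\Delta_+$ is finite, so the partial order ``$>$'' admits maximal elements; let $\beta$ be one of them and write $\beta=\sum n_i\alpha_i$. Maximality forces dominance: if $(\beta,\alpha_i)<0$ for some $i$, then the $\alpha_i$-string through $\beta$ extends upward (its length is controlled by $n(\beta,\alpha_i)=2(\beta,\alpha_i)/(\alpha_i,\alpha_i)<0$), so $\beta+\alpha_i\in\Delta_+$, contradicting maximality; hence $(\beta,\alpha_i)\ge 0$ for all $i$. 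To obtain $n_i\ge 1$ (full support), suppose the support $S$ of $\beta$ is proper; irreducibility makes the Dynkin diagram connected, so some $\alpha_j$ with $j\notin S$ is joined to $S$, giving $(\beta,\alpha_j)=\sum_{i\in S}n_i(\alpha_i,\alpha_j)<0$ and thus $\beta+\alpha_j\in\Delta$, again contradicting maximality. Uniqueness of the maximal root follows in the same spirit: if $\beta,\beta'$ are both maximal, both are dominant with full support, so $(\beta,\beta')=\sum_i n_i'(\beta,\alpha_i)>0$ (the sum is nonnegative and cannot vanish since $\beta\neq 0$); but two distinct roots with positive inner product have their difference again a root, which is positive or negative and therefore makes one of $\beta,\beta'$ strictly larger than the other. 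Hence the maximal root $\beta$ is unique; this is the highest root, which settles item~1.

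For item~2 the key is that length is constant on $W(\Delta)$-orbits, so it suffices to bound $(\alpha,\alpha)$ for $\alpha$ dominant. The step I expect to be the main obstacle is the domination property $\beta\ge\alpha$ for every $\alpha\in\Delta_+$: the cleanest justification is that for $\g$ simple the adjoint representation is irreducible with highest weight $\beta$, so every root, being a weight, satisfies $\alpha\le\beta$, but one can also argue entirely inside $\Delta$ by a ``climbing'' lemma showing that any positive root other than $\beta$ can be raised by a suitable simple root. Granting this, for a dominant root $\alpha$ we have $\beta-\alpha=\sum_i m_i\alpha_i$ with all $m_i\ge 0$, and then
\[
(\beta,\beta)-(\alpha,\alpha)=(\beta+\alpha,\beta-\alpha)=\sum_i m_i\big((\beta,\alpha_i)+(\alpha,\alpha_i)\big)\ge 0,
\]
since $\beta$ and $\alpha$ are both dominant. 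Therefore $(\beta,\beta)\ge(\alpha,\alpha)$ for all $\alpha\in\Delta$.

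The remaining items are then short. For item~3, since $\beta$ has maximal length we have $\big|2(\alpha,\beta)/(\beta,\beta)\big|\le\big|2(\alpha,\beta)/(\alpha,\alpha)\big|$, while the product of the two Cartan integers equals $4(\alpha,\beta)^2/\big((\alpha,\alpha)(\beta,\beta)\big)=4\cos^2\theta$, a nonnegative integer which is at most $3$ for $\alpha\ne\pm\beta$ (the value $4$ occurring only when $\alpha=\pm\beta$). Combining these two facts forces $\big|2(\alpha,\beta)/(\beta,\beta)\big|\le 1$, so the relevant Cartan integer is $-1,0$ or $1$. For items~4 and~5 I would invoke the general consequence of the root-string relations recalled above: for roots $\alpha\ne\pm\beta$ one has $\beta-\alpha\in\Delta$ whenever $(\alpha,\beta)>0$ and $\beta+\alpha\in\Delta$ whenever $(\alpha,\beta)<0$.

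It remains only to check that the stated hypotheses put us in the range $\alpha\ne\pm\beta$. In item~4 the assumptions $\alpha\in\Delta_+$ and $\alpha\ne\beta$ give $\alpha\ne\pm\beta$ (as $-\beta\in\Delta_-$), so $(\alpha,\beta)>0$ yields $\beta-\alpha\in\Delta$; symmetrically, in item~5 the assumptions $\alpha\in\Delta_-$ and $\alpha\ne-\beta$ give $\alpha\ne\pm\beta$, so $(\alpha,\beta)<0$ yields $\beta+\alpha\in\Delta$. Thus the whole proposition reduces to the existence, uniqueness, dominance and length-maximality of $\beta$, the last being the genuinely delicate point because of its reliance on the domination $\beta\ge\alpha$.
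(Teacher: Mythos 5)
The paper itself contains no proof of this proposition: it is stated as background (it is the standard package of facts about the highest root, as in Bourbaki or Humphreys, Lemma 10.4.A--C), so there is nothing internal to compare you against. Your argument is essentially that classical one, and it is correct in outline: root strings give dominance of a maximal root, connectedness of the Dynkin diagram gives full support, $(\beta,\beta')>0$ together with ``positive inner product $\Rightarrow$ the difference is a root'' gives uniqueness, and the identity $(\beta,\beta)-(\alpha,\alpha)=(\beta+\alpha,\beta-\alpha)\geq 0$ for dominant $\alpha$ settles item 2. One improvement: the step you single out as the genuinely delicate point, the domination $\beta-\alpha=\sum_i m_i\alpha_i$ with $m_i\geq 0$ for every $\alpha\in\Delta_+$, does not need the adjoint representation or a separate climbing lemma; it already follows from what you proved. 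If $\alpha$ is not maximal, there is a root $\alpha+\delta$ with $\delta\in\Delta_+$; iterating, heights strictly increase, so the process stops at a maximal root, which is $\beta$ by your uniqueness argument, and the total increment is a sum of positive roots, hence a nonnegative integer combination of simple roots. With that observation your proof is self-contained.

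A second point worth recording: item 3 as printed is false as stated. With the paper's own convention $n(\beta,\alpha)=2(\alpha,\beta)/(\alpha,\alpha)$, take $\g$ of type $G_2$, $\beta=3\alpha_1+2\alpha_2$ and $\alpha=2\alpha_1+\alpha_2$ (the highest short root); then $2(\alpha,\beta)/(\alpha,\alpha)=3$. What your argument actually establishes is the corrected statement $2(\alpha,\beta)/(\beta,\beta)\in\lbrace-1,0,1\rbrace$ for $\alpha\neq\pm\beta$: maximal length of $\beta$ makes this the smaller of the two Cartan integers, and their product is $4\cos^2\theta\leq 3$. This repaired version is also the one the paper actually uses later (the proof of the proposition on $\Gamma_K$ relies on $(\beta_K,\gamma)=\frac{1}{2}(\beta_K,\beta_K)$, i.e.\ on the Cartan integer taken with respect to $\beta_K$). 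So your proof is right, but it proves item 3 with the roles of $\alpha$ and $\beta$ interchanged relative to the printed formula, and that interchange is a necessary correction, not a slip.
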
 
\begin{de}

One says that two roots $\alpha$ and $\gamma$ are strongly orthogonal if $\alpha+\gamma\in\Delta$ and $\alpha-\gamma\in\Delta$\\
A subset $\Gamma$ of $\Delta$ is said strongly orthogonal if for all $\alpha$ and $\gamma$ are strongly orthogonal. 
\end{de}
\begin{prop}
Let  $\alpha$ and $\gamma$ be two roots strongly orthogonal of $\Delta$. Then we have $(\alpha,\gamma)=0$.
\end{prop}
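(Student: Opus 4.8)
The plan is to prove this through the theory of $\alpha$-root strings, which is already available from the data $(p,q)$ introduced for the structure constants $N_{\alpha,\gamma}$. Recall that $\alpha$ and $\gamma$ being strongly orthogonal means, in the working form needed here, that neither $\alpha+\gamma$ nor $\alpha-\gamma$ lies in $\Delta$; equivalently, $\gamma+k\alpha\notin\Delta$ for every nonzero integer $k$. Since the set $\{k\in\Z:\gamma+k\alpha\in\Delta\}$ is an unbroken interval $[p,q]$ containing $0$, the failure of both $\gamma+\alpha$ and $\gamma-\alpha$ to be roots forces $p=q=0$, so the $\alpha$-string through $\gamma$ collapses to the single root $\gamma$.

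First I would run the reflection argument. The Weyl reflection $s_\alpha\in W(\Delta)$ permutes $\Delta$, and a direct computation $s_\alpha(\gamma+k\alpha)=\gamma-\bigl(\tfrac{2(\gamma,\alpha)}{(\alpha,\alpha)}+k\bigr)\alpha$ shows that $s_\alpha$ carries the $\alpha$-string through $\gamma$ bijectively onto itself (reversing it). Because that string is the single root $\gamma$, its image must again be $\gamma$, so $s_\alpha(\gamma)=\gamma-\tfrac{2(\gamma,\alpha)}{(\alpha,\alpha)}\alpha=\gamma$, whence $\tfrac{2(\gamma,\alpha)}{(\alpha,\alpha)}\alpha=0$. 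As $\alpha\neq 0$ and $(\,,\,)$ is nondegenerate, this yields $(\alpha,\gamma)=0$.

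As an alternative that uses the Weyl basis of the excerpt directly, one can argue representation-theoretically: the vectors $X_\alpha\in\g^{\alpha}$, $Y_\alpha\in\g^{-\alpha}$ together with $H_\alpha$ span a subalgebra isomorphic to $\mathfrak{sl}_2$, and $\bigoplus_k\g^{\gamma+k\alpha}$ is one of its finite-dimensional modules whose $H_\alpha$-weights are the integers $\tfrac{2(\gamma,\alpha)}{(\alpha,\alpha)}+2k$ over the string. Triviality of the string makes this module one-dimensional, so its lone weight $\tfrac{2(\gamma,\alpha)}{(\alpha,\alpha)}$ must equal $0$, again giving $(\alpha,\gamma)=0$.

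The only genuinely delicate input is that the index set $\{k:\gamma+k\alpha\in\Delta\}$ is a contiguous interval containing $0$; this is the standard unbrokenness of root strings, and it is precisely what lets the two separate hypotheses $\gamma+\alpha\notin\Delta$ and $\gamma-\alpha\notin\Delta$ pinch the entire string down to a point. Everything following that reduction is a one-line identity, so I expect no further obstacle. I would also flag, for the reader, that the operative notion of strong orthogonality is the non-existence of $\alpha\pm\gamma$ as roots, since that is what makes the conclusion $(\alpha,\gamma)=0$ hold.
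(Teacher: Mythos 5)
Your proof is correct, and there is in fact nothing in the paper to compare it against: the paper states this proposition without any proof, so your argument supplies the missing justification. Both of your routes are sound. The root-string route (unbrokenness of $\{k:\gamma+k\alpha\in\Delta\}$ pinches the string to $\{\gamma\}$, then the reflection identity forces $s_\alpha(\gamma)=\gamma$) correctly reuses the paper's own $(p,q)$ data; note only that nondegeneracy of $(\,,\,)$ is not needed at the end, since from $\tfrac{2(\gamma,\alpha)}{(\alpha,\alpha)}\alpha=0$ and $\alpha\neq0$ the scalar coefficient already vanishes. The $\mathfrak{sl}_2$ route can even be shortened so that it avoids unbrokenness of strings altogether: since $[X_\alpha,\g^{\gamma}]\subset\g^{\gamma+\alpha}=0$ and $[Y_\alpha,\g^{\gamma}]\subset\g^{\gamma-\alpha}=0$, the line $\g^{\gamma}$ is itself a submodule on which $X_\alpha$ and $Y_\alpha$ act by zero, so $\gamma(H_\alpha)X_\gamma=[[X_\alpha,Y_\alpha],X_\gamma]=0$ by the Jacobi identity, giving $(\alpha,\gamma)=0$ in one line. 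Finally, your decision to flag the definition is the most valuable part of the write-up and deserves to be made explicit: the paper's definition literally says that strong orthogonality means $\alpha+\gamma\in\Delta$ and $\alpha-\gamma\in\Delta$, which must be a typo for $\notin$, because under the literal reading the proposition is false --- in $G_2$ with short simple root $\alpha_1$ and long simple root $\alpha_2$, the roots $\alpha=\alpha_1$ and $\gamma=2\alpha_1+\alpha_2$ satisfy $\alpha+\gamma=3\alpha_1+\alpha_2\in\Delta$ and $\gamma-\alpha=\alpha_1+\alpha_2\in\Delta$, yet $(\alpha,\gamma)\neq0$. So the correction you made is not cosmetic; it is exactly what makes the statement true.
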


\begin{prop} We suppose that $\Delta$ is irrecducible. Let $\beta$ be
 the highest root of $\Delta$. We set $\beta^{\perp}:=\lbrace\alpha\in\Delta:(\alpha,\beta)=0\rbrace$.
 \begin{enumerate}
 \item For all $\alpha\in\beta^{\perp}$, then $\alpha$ and $\beta$ are strongly orthogonal.
 \item $\beta^{\perp}$ is a root system of a Lie semisimple sub
 algebra $\g_{\beta}$ of $\g$.
 \end{enumerate}
\end{prop}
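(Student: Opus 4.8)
The plan is to handle the two assertions separately, using the maximality of the highest root for the first and the reflection-invariance of $\Delta$ for the second. For assertion~1, fix $\alpha\in\beta^{\perp}$; note first that $\alpha\neq\pm\beta$, since $(\beta,\beta)>0$ while $(\alpha,\beta)=0$. I would analyse the $\alpha$-string through $\beta$, namely the integers $k$ with $\beta+k\alpha\in\Delta$, recorded by the interval $[p,q]$ (with $p\le 0\le q$) introduced in the preamble. Applying the reflection $s_\alpha$, which reverses the string, yields the standard relation $n(\beta,\alpha)=-(p+q)$; since $(\alpha,\beta)=0$ we have $n(\beta,\alpha)=2(\alpha,\beta)/(\alpha,\alpha)=0$, so the string is symmetric, $p=-q$. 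It then suffices to show $q=0$. As $\{\beta+\alpha,\beta-\alpha\}$ is unchanged when $\alpha$ is replaced by $-\alpha\in\beta^{\perp}$, I may assume $\alpha\in\Delta_{+}$. If $q\ge 1$ then $\beta+\alpha\in\Delta$ with $\beta+\alpha>\beta$ in the partial order, contradicting that $\beta$ is the highest root. Hence $q=0$, so $p=0$ as well, and neither $\beta+\alpha$ nor $\beta-\alpha$ is a root, i.e. $\alpha$ and $\beta$ are strongly orthogonal.

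For assertion~2, I would first verify that $\beta^{\perp}$ is a root system inside the hyperplane $\{v:(v,\beta)=0\}$. It is finite, does not contain $0$, and satisfies $-\beta^{\perp}=\beta^{\perp}$. The decisive axiom is stability under reflections: for $\alpha,\gamma\in\beta^{\perp}$ one has $s_\alpha(\gamma)=\gamma-n(\gamma,\alpha)\,\alpha\in\Delta$ because $\Delta$ is a root system, and $(s_\alpha(\gamma),\beta)=(\gamma,\beta)-n(\gamma,\alpha)(\alpha,\beta)=0$, so $s_\alpha(\gamma)\in\beta^{\perp}$; the Cartan integers $n(\gamma,\alpha)$ stay integral, being inherited from $\Delta$. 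Thus $\beta^{\perp}$ is a reduced root system.

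It remains to realise $\beta^{\perp}$ as the root system of a semisimple subalgebra. I would set $\got{h}_\beta:=\sum_{\alpha\in\beta^{\perp}}\R H_\alpha$ and $\g_\beta:=\got{h}_\beta\oplus\bigoplus_{\alpha\in\beta^{\perp}}\g^{\alpha}$, and check that this is a subalgebra. The key point is that $\beta^{\perp}$ is \emph{closed}: if $\alpha,\gamma\in\beta^{\perp}$ and $\alpha+\gamma\in\Delta$, then $(\alpha+\gamma,\beta)=0$ forces $\alpha+\gamma\in\beta^{\perp}$, so $[\g^{\alpha},\g^{\gamma}]\subseteq\g^{\alpha+\gamma}\subseteq\g_\beta$; together with $[\g^{\alpha},\g^{-\alpha}]=\R H_\alpha\subseteq\got{h}_\beta$ and $[\got{h}_\beta,\g^{\alpha}]\subseteq\g^{\alpha}$ this gives a subalgebra with a root-space decomposition relative to $\got{h}_\beta$ whose root system is exactly $\beta^{\perp}$. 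Finally I would deduce semisimplicity: the centre of $\g_\beta$ is trivial, since $\beta^{\perp}$ spans $\got{h}_\beta^{*}$ (no nonzero element of $\got{h}_\beta$ is annihilated by every root, and no root vector is central), and $\g_\beta$ is reductive as an $\got{h}_\beta$-module. The step I expect to be the real obstacle is precisely this last one, separating \emph{semisimple} from merely reductive: the bracket-closure and the root-system axioms are routine bookkeeping, but ruling out a solvable radical needs care. I would close the gap by assembling, for each $\alpha$ in a basis of $\beta^{\perp}$, the $\mathfrak{sl}_2$-triple $(X_\alpha,H_\alpha,Y_\alpha)$ and invoking that the subalgebra they generate obeys the Serre relations dictated by the Cartan matrix of $\beta^{\perp}$, hence is semisimple with root system $\beta^{\perp}$; equivalently, one shows the restriction of the Killing form to $\g_\beta$ is nondegenerate and applies Cartan's criterion.
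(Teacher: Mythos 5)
The paper states this proposition without any proof, so there is no argument of the author's to measure yours against; what follows is an assessment of your proposal on its own terms. Your part 1 is correct and is the standard argument: the reflection $s_\alpha$ reverses the $\alpha$-string through $\beta$, giving $n(\beta,\alpha)=-(p+q)$, so $(\alpha,\beta)=0$ makes the string symmetric, and then maximality of the highest root (after the harmless reduction to $\alpha\in\Delta_{+}$) forces $q=0$, hence $\beta+\alpha\notin\Delta$ and $\beta-\alpha\notin\Delta$. Note that what you prove is the condition $\beta\pm\alpha\notin\Delta$; the paper's Definition 1 literally asks that $\alpha+\gamma\in\Delta$ and $\alpha-\gamma\in\Delta$, which is an evident typo (it would contradict the paper's own Proposition 2, since strongly orthogonal roots there satisfy $(\alpha,\gamma)=0$), so your reading is the intended one.

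Part 2 is essentially right — the closure of $\beta^{\perp}$ under sums and differences lying in $\Delta$, the subalgebra $\g_\beta=\got{h}_\beta\oplus\bigoplus_{\alpha\in\beta^{\perp}}\g^{\alpha}$, and the trivial-center computation are all sound — but two steps need tightening. First, in the Serre-relations route you must also show that the subalgebra generated by the triples $(X_\alpha,H_\alpha,Y_\alpha)$, $\alpha$ ranging over a base of $\beta^{\perp}$, is \emph{all} of $\g_\beta$: this uses that every root of $\beta^{\perp}$ is a sum of base roots with all partial sums in $\beta^{\perp}$, together with $[\g^{\alpha},\g^{\gamma}]=\g^{\alpha+\gamma}$ whenever $\alpha,\gamma,\alpha+\gamma\in\Delta$ (i.e. $N_{\alpha,\gamma}\neq 0$); likewise, verifying the Serre relations themselves uses the closure of $\beta^{\perp}$ under differences, which guarantees that for distinct base roots $\alpha,\gamma$ one has $\gamma-\alpha\notin\Delta$, so the $\alpha$-string through $\gamma$ in $\Delta$ starts at $\gamma$ and has length $-n(\gamma,\alpha)$. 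Second, your alternative ending via ``Cartan's criterion'' is misstated: nondegeneracy of the restriction to $\g_\beta$ of the Killing form of $\g$ does not feed Cartan's criterion, which concerns the trace form of the adjoint representation of $\g_\beta$ itself. The classical statement you want (see Dixmier, \emph{Enveloping Algebras}, 1.7.6) is that a subalgebra of a semisimple Lie algebra on which the ambient Killing form restricts nondegenerately is reductive in $\g$; combining that with your trivial-center computation upgrades reductive to semisimple. With these repairs, either route completes the proof.
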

\begin{thm}
\begin{enumerate}
\item Let $(\Delta_{i})_{i}$ be the sum of irrecduble system of $\Delta$. For each $i$, let $\beta_i$ be the highest root of $\Delta_i$, and let $(\Delta_{ij})_{j}$ be the sum of irrecduble system of $\beta^{\perp}_{i}$, the set of orthogonal roots to $\beta_{i}$ in $\Delta_i$. We fixe $i$ and $j$, let $\beta_{ij}$ be the highest root of $\Delta_{ij}$, and let $(\Delta_{ijk})_{k}$ be the sum of irrecduble system of $\beta^{\perp}_{ij}$, the set of orthogonal roots to $\beta_{ij}$ in $\Delta_{ij}$. Then the process defined a subset $I(\Delta)=\lbrace i,ij,ijk,...\rbrace$ and the set $(\beta_s)_{s\in I(\Delta)}$ is a maximum set of strongly roots of $\Delta$.
\item Let $\g$ be a semisimple Lie algebra. Two maximum sets of strongly roots of $\Delta$ are isomorphic by the Weyl group $W(\Delta)$.
\end{enumerate}
\end{thm}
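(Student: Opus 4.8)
The plan is to establish the two assertions by inductions on the rank: the construction and its maximality in part (1) directly, and the Weyl-conjugacy in part (2) by reducing everything to a single key lemma.

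For (1) I would first verify that the recursion halts and then that its output is strongly orthogonal. Each node $s$ of the tree carries an irreducible system $\Sigma_s$ with a well-defined highest root $\beta_s$ (Proposition 1), and its children are the irreducible components of $\{\alpha\in\Sigma_s:(\alpha,\beta_s)=0\}$, which is the root system of a semisimple subalgebra by Proposition 3(2); since this set lies in the hyperplane orthogonal to $\beta_s$ its rank drops strictly, so the tree is finite and $I(\Delta)$ is well defined. To see that $(\beta_s)$ is strongly orthogonal I would split a pair $\beta_s,\beta_t$ according to its position in the tree: if $s$ is an ancestor of $t$, then $\beta_t$ was chosen inside $\beta_s^{\perp}$ and Proposition 3(1) applies directly; if $s$ and $t$ are incomparable with deepest common ancestor $u$, then $\beta_s$ and $\beta_t$ lie in two distinct, hence orthogonal, irreducible components of $\beta_u^{\perp}$, and a sum or difference of roots drawn from different components has nonzero projection on both spans and so is not a root. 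That this remains a statement about $\Delta$ rather than the subsystem follows because each $\beta^{\perp}$ is a closed subsystem, so any root of $\Delta$ orthogonal to $\beta_u$ already lies in $\beta_u^{\perp}$.

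For maximality I would assume a root $\gamma\in\Delta$ is strongly orthogonal to every $\beta_s$ but equals none of them, and derive a contradiction. By Proposition 2, $\gamma$ is orthogonal to each $\beta_s$; starting from the component $\Sigma_i$ containing $\gamma$, orthogonality to $\beta_i$ together with $\gamma\neq\beta_i$ puts $\gamma$ into $\beta_i^{\perp}$ and hence into a child component, and iterating makes $\gamma$ descend the tree. Because the rank strictly decreases, $\gamma$ must reach a leaf $s$ with $\beta_s^{\perp}=\emptyset$; but $\gamma\perp\beta_s$ then forces $\gamma\in\beta_s^{\perp}=\emptyset$, which is absurd. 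Hence nothing can be adjoined and the set is maximal. For (2), writing $B_{\Delta}$ for the set built in (1), I would reduce to $\Delta$ irreducible — components are mutually orthogonal, $W(\Delta)=\prod_iW(\Delta_i)$, and a maximum set is a union of maximum sets of the $\Delta_i$ — and then induct on the rank to show that every maximum strongly orthogonal set $S$ is $W(\Delta)$-conjugate to $B_{\Delta}$. Granting the key lemma that, after some $w\in W(\Delta)$, one may assume $\beta\in S$, the argument closes formally: $S\setminus\{\beta\}$ lies in $\beta^{\perp}$ and is maximum there (a larger strongly orthogonal subset of $\beta^{\perp}$ would, together with $\beta$, beat $S$ via Proposition 3(1)), so by induction it is $W(\beta^{\perp})$-conjugate to $B_{\beta^{\perp}}$; as $W(\beta^{\perp})$ sits inside $W(\Delta)$ fixing $\beta$, this conjugates $S$ onto $\{\beta\}\cup B_{\beta^{\perp}}=B_{\Delta}$, and two maximum sets each conjugate to $B_{\Delta}$ are conjugate to one another.

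The main obstacle is precisely this key lemma. I would attempt it by choosing $\gamma\in S$ of maximal length, arguing that maximality of $|S|$ prevents $S$ from avoiding the long roots, and then using that all roots of a fixed length in an irreducible system form a single $W$-orbit to move $\gamma$ onto $\beta$ without disturbing the strong orthogonality of the remaining elements. The delicate point is justifying that the selected root can be taken long; this is exactly where \emph{maximum} (largest cardinality) must be used rather than merely \emph{maximal} (non-extendable), since in $B_2$ a short root already gives a non-extendable set of size $1$ while the maximum size is $2$.
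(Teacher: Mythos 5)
The paper states this theorem with no proof at all, so there is nothing of the author's to compare your argument against; judged on its own terms, your proposal is structurally sound but has one genuine gap, and it sits exactly where you flagged it. Your part (1) is essentially complete: termination of the recursion, the ancestor-versus-incomparable case split, the fullness observation (each $\beta^{\perp}$ contains \emph{every} root of the ambient system orthogonal to $\beta$, so that non-membership of $\beta_s\pm\beta_t$ in the subsystem implies non-membership in $\Delta$), and the descent argument are all correct. But note what the descent argument actually yields: the cascade is \emph{maximal} (non-extendable), whereas the theorem asserts it is \emph{maximum} (largest cardinality) --- a strictly stronger property, as your own $B_2$ example demonstrates. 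In your architecture the stronger property can only be recovered from part (2): a maximum set is conjugate to the cascade, hence the cascade realizes the maximum cardinality. So the whole theorem ends up resting on part (2), and part (2) rests on the key lemma you left unproved.

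That lemma --- after conjugation by some $w\in W(\Delta)$ one may assume the highest root $\beta$ lies in $S$ --- is the crux, and your sketch of it ("maximality of $|S|$ prevents $S$ from avoiding the long roots") is an assertion, not an argument. Since $\beta$ is long and roots of equal length form a single $W(\Delta)$-orbit in an irreducible system, the lemma is equivalent to: every maximum strongly orthogonal set contains a long root. This is true, but I know of no uniform proof; it is a type-by-type verification. In $B_r$ and $F_4$ the sum of any two orthogonal short roots is again a root, and in $G_2$ no two short roots are even orthogonal, so in these types a strongly orthogonal set contains at most one short root and an all-short set has size $1$, smaller than the cascade. In $C_r$ the short roots are $e_i\pm e_j$ and strong orthogonality forces disjoint index pairs, so an all-short set has at most $[\frac{r}{2}]$ elements, fewer than the $r$ long roots $2e_1,\dots,2e_r$ of the cascade. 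In the simply-laced types the lemma is vacuous. Without this verification (or a substitute for it), part (2), and with it the "maximum" claim of part (1), remains unproved. The remainder of your induction for (2) is fine: $S\setminus\lbrace\beta\rbrace$ lies in $\beta^{\perp}$ and is maximum there (otherwise adjoin $\beta$, using fullness again, to beat $S$), $W(\beta^{\perp})$ fixes $\beta$, and $\lbrace\beta\rbrace\cup B_{\beta^{\perp}}=B_{\Delta}$; so once the key lemma is supplied, both parts close correctly.
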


\begin{prop}

\begin{enumerate}
\item We define a partiel ordre on $I(\Delta)$ by: \\
$l\leq k$ if there is $l_1,..,l_t\in\N$ such that $k=l,l_1,...,l_t$, with $l,k\in I(\Delta)$.
\item For all $K\in I(\Delta))$, there is a unique maximal element $L$ of $I(\Delta)$ such that $L<K$.
\end{enumerate}

\end{prop}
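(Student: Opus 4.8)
The plan is to recognize that every element of $I(\Delta)$ is, by construction, a finite word $s = i_1 i_2 \cdots i_n$ over $\N$, and that the relation in part 1 is exactly the prefix (initial-segment) order on such words. This endows $I(\Delta)$ with the structure of a rooted forest, whose roots are the singletons $i$ indexing the irreducible components of $\Delta$; part 2 then becomes the assertion that each non-root node has a unique parent. Throughout I would use the bookkeeping of the preceding Theorem: the label $i_1$ marks an irreducible component $\Delta_{i_1}$ of $\Delta$, the label $i_1 i_2$ marks an irreducible component $\Delta_{i_1 i_2}$ of $\beta_{i_1}^{\perp}$ inside $\Delta_{i_1}$, and so on down the recursion.

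For part 1, I would verify the three poset axioms for the prefix order. Reflexivity is the case $t=0$, where $k=l$. Transitivity holds because a prefix of a prefix is a prefix: if $k$ extends $l$ by a tail and $m$ extends $k$ by a tail, then $m$ extends $l$ by the concatenation of the two tails. Antisymmetry follows by comparing lengths, since $l\leq k$ and $k\leq l$ force $l$ and $k$ to have the same number of entries and hence to coincide entry by entry. This shows $\leq$ is a well-defined partial order on $I(\Delta)$.

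For part 2, the decisive structural fact is that $I(\Delta)$ is closed under taking prefixes: if $K=i_1\cdots i_n\in I(\Delta)$, then every truncation $i_1\cdots i_m$ with $1\leq m\leq n$ must already have been produced at an earlier stage of the construction (a depth-$n$ label can only be created after its depth-$(n-1)$ prefix has been chosen as a highest root and its orthogonal complement decomposed), so each truncation again lies in $I(\Delta)$. Granting this, fix $K=i_1\cdots i_n$ with $n\geq 2$ (a root $K=i_1$ has no strict predecessor). The elements $L\in I(\Delta)$ with $L<K$ are precisely the proper prefixes $\{\, i_1\cdots i_m : 1\leq m\leq n-1 \,\}$, and these are \emph{linearly} ordered by $\leq$, shorter preceding longer. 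A finite chain possesses a unique maximum, and here it is the longest proper prefix $L=i_1\cdots i_{n-1}$, obtained by deleting the last label of $K$; since the set of predecessors is a chain, the maximal elements coincide with this greatest element, giving existence and uniqueness at once.

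The one genuinely non-formal point — and the step I expect to be the main obstacle — is the prefix-closure of $I(\Delta)$: it is not a tautology but a reading-off of how the recursion in the Theorem unfolds, namely that admissibility of a multi-index is hereditary in its truncations. Once that hereditary property is made explicit, the linear ordering of the predecessor chain and the existence and uniqueness of its maximum follow immediately, and part 2 is complete.
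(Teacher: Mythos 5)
Your proof is correct, but there is nothing in the paper to compare it against: the paper states this proposition with no proof whatsoever, so your argument supplies what the paper omits rather than paralleling anything in it. Your reading of $I(\Delta)$ as a prefix-closed set of finite words --- a rooted forest in which part 1 is the prefix order and part 2 asserts that every non-root node has a unique parent --- is exactly the structure implicit in the recursive construction of the preceding Theorem, and your three steps (poset axioms; prefix-closure of $I(\Delta)$ as a consequence of the recursion; the set of strict predecessors of $K=i_1\cdots i_n$ being the chain of proper prefixes, whose unique maximum is $i_1\cdots i_{n-1}$) are each sound and together complete. Two points you raise deserve to stay explicit if this is spliced into the paper. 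First, reflexivity requires reading the paper's definition as permitting $t=0$; the paper's phrasing ``there is $l_1,\dots,l_t\in\N$'' does not say so, and with $t\geq 1$ forced the relation would be a strict order. Second, as you note parenthetically, a minimal element $K=i_1$ (one indexing an irreducible component of $\Delta$ itself) has no strict predecessor at all, so part 2 as stated is false for such $K$ and needs the hypothesis that $K$ has depth at least $2$ --- a hypothesis the paper silently omits. Your identification of prefix-closure as the only non-formal ingredient is also accurate: it is precisely the hereditary character of the recursion (a label $i_1\cdots i_n$ can only be introduced inside the already-labelled component $\Delta_{i_1\cdots i_{n-1}}$), and once that is said, everything else is bookkeeping.
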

\begin{prop}
A subset $\Gamma$ of $\Delta$ is called parabolic if for all $L\in \Gamma$, we have $K\in\Gamma$ for all $K\leq L$. Let $K\in I(\Delta)$, then $I_K=\lbrace L\in I(\Delta):L\leq K \rbrace$ is parabolic.
\end{prop}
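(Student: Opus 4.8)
The plan is to read $I_K$ as the principal lower set (order ideal) generated by $K$ in the poset $(I(\Delta),\leq)$, and to deduce that it is parabolic, i.e. down-closed, directly from the transitivity of $\leq$. Concretely, to show that $I_K$ is parabolic I must check that whenever $L\in I_K$ and $M\in I(\Delta)$ with $M\leq L$, one also has $M\in I_K$; since membership $L\in I_K$ is by definition the statement $L\leq K$, this reduces to proving $M\leq K$.

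First I would make the relation $\leq$ explicit. An element of $I(\Delta)$ is a finite multi-index $i,\,ij,\,ijk,\dots$ produced by the inductive construction of the previous theorem, and $L\leq K$ holds precisely when $K$ is obtained from $L$ by appending further indices $l_1,\dots,l_t\in\N$, that is, when $L$ is an initial segment (prefix) of $K$. With this reading the verification of transitivity is immediate: if $L\leq K$ via $K=L,l_1,\dots,l_t$ and $M\leq L$ via $L=M,m_1,\dots,m_s$, then $K=M,m_1,\dots,m_s,l_1,\dots,l_t$ exhibits $M$ as a prefix of $K$, so $M\leq K$.

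Combining the two observations finishes the argument: from $M\leq L$ and $L\leq K$ transitivity gives $M\leq K$, hence $M\in I_K$, which is exactly the parabolicity of $I_K$. I expect no genuine obstacle here; the only content is the transitivity of $\leq$, after which the proposition is the standard fact that every principal ideal of a partially ordered set is a lower set. The one point worth stating carefully is that the relation $\leq$ of the preceding proposition is indeed a partial order on $I(\Delta)$ (in particular transitive) once it is identified with the prefix order on multi-indices, and that the intended ambient set in the definition of \emph{parabolic} is $I(\Delta)$ rather than $\Delta$; with these conventions fixed the claim is purely formal.
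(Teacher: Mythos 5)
Your proof is correct. The paper actually states this proposition with no proof at all, so there is nothing to compare against: your argument --- reading $I_K$ as the principal lower set generated by $K$ under the prefix order on multi-indices, verifying transitivity by concatenation of suffixes, and concluding that a principal lower set is down-closed --- is precisely the routine verification the paper leaves implicit. Your two clarifying remarks are also correct readings of the text: the relation $\leq$ from the preceding proposition is the prefix order (hence a genuine partial order, in particular transitive), and the ambient set in the definition of \emph{parabolic} must be $I(\Delta)$ rather than $\Delta$, despite the misprint in the statement.
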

Let $K\in I(\Delta)$, we set:
$$ \Delta^{+}_K:= \Delta^{+}\cap\Delta_K$$
$$\Gamma_K:=\lbrace\gamma\in\Delta_K:(\gamma,\beta_K)>0\rbrace$$

\begin{prop}Let $L,K\in\I(\Delta)$. We have
\begin{enumerate}
\item $\Gamma_K$ is the complement of $\beta_K^{\perp}$ in $\Delta_K$,and $\Delta^{+}$ is disjoint union of $\Gamma_K$, $K\in I(\Delta)$.
\item Let $\gamma\in\Gamma_K$ and $\delta\in\Gamma_L$ such that $\gamma+\delta\in\Delta$, then $\gamma+\delta\in\Gamma_K$ ($K\leq L$) or $\gamma+\delta\in\Gamma_L$ ($L\leq K$).
\item For all $\gamma\in\Gamma_K$, $\gamma\neq\beta_K$, and 
$(\beta_K,\gamma)=\frac{1}{2}(\beta_K,\beta_K)$, $\beta_K-\gamma\in\Gamma_K$, $\beta_K-\gamma\neq\beta_K$
\item Let $\gamma,\delta\in\Gamma_K-\lbrace\beta_K\rbrace$ such that $\gamma+\delta\in\Delta$, then $\gamma+\delta=\beta_K$.
ty of the highest root $\beta_K$, we have $\beta_K=\gamma+\delta$. 
\end{enumerate} 
\end{prop}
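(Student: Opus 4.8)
The plan is to establish the four items in the order (1), (3), (4), (2), since the length arithmetic developed for the middle items is exactly what drives the combinatorial statement (2). Everything rests on one structural fact: inside each irreducible cascade subsystem $\Delta_K$ the distinguished root $\beta_K$ is its highest root, hence a \emph{long} root. By Proposition 1 this makes $\beta_K$ dominant, so $(\beta_K,\mu)\geq 0$ for every $\mu\in\Delta_K^{+}$, and for every root $\gamma\in\Delta_K$ not proportional to $\beta_K$ the integer $2(\gamma,\beta_K)/(\beta_K,\beta_K)$ lies in $\{-1,0,1\}$. I will also use that each cascade subsystem has the form $\Delta\cap W$ for a linear subspace $W$ (it is built from $\Delta$ by intersecting with hyperplanes $\{(\,\cdot\,,\beta)=0\}$ and with spans of irreducible components), so it is closed: a sum of two of its roots that happens to lie in $\Delta$ lies again in the subsystem.

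For (1), I would first rule out negative roots in $\Gamma_K$: if $(\gamma,\beta_K)>0$ but $\gamma\in\Delta_K^{-}$, then $-\gamma\in\Delta_K^{+}$ would force $(\beta_K,-\gamma)\geq 0$, a contradiction; hence $\Gamma_K\subseteq\Delta^{+}$. Dominance then yields $\Delta_K^{+}=\Gamma_K\sqcup(\beta_K^{\perp}\cap\Delta_K^{+})$, which is the complement statement. The global equality $\Delta^{+}=\bigsqcup_{K\in I(\Delta)}\Gamma_K$ I would prove by induction on rank: $\Delta^{+}$ is the disjoint union of the positive parts of the irreducible components $\Delta_i$; each splits as $\Gamma_i\sqcup(\beta_i^{\perp}\cap\Delta_i^{+})$; and $\beta_i^{\perp}$ is a root system (Proposition 3) whose span lies in a hyperplane, so its rank is strictly smaller and the induction hypothesis applies. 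The recursion terminates and the union is disjoint because a positive root kept orthogonal to every $\beta$ must eventually become the highest root of the smallest subsystem still containing it, at which stage its self-pairing is positive and it enters the corresponding $\Gamma$.

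Item (3): for $\gamma\in\Gamma_K$ with $\gamma\neq\beta_K$ the pairing $(\gamma,\beta_K)>0$ together with $2(\gamma,\beta_K)/(\beta_K,\beta_K)\in\{-1,0,1\}$ forces $2(\gamma,\beta_K)/(\beta_K,\beta_K)=1$, that is $(\gamma,\beta_K)=\tfrac12(\beta_K,\beta_K)$. Since $\gamma\in\Delta_K^{+}$ and $(\gamma,\beta_K)>0$, item (4) of Proposition 1 applied to $\Delta_K$ gives $\beta_K-\gamma\in\Delta_K$, and the computation $(\beta_K-\gamma,\beta_K)=\tfrac12(\beta_K,\beta_K)>0$ places it in $\Gamma_K$; it differs from $\beta_K$ because $\gamma\neq 0$. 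Item (4) is then immediate: for $\gamma,\delta\in\Gamma_K\setminus\{\beta_K\}$, item (3) gives $(\gamma+\delta,\beta_K)=(\beta_K,\beta_K)$, so $2(\gamma+\delta,\beta_K)/(\beta_K,\beta_K)=2$; since $\gamma+\delta\in\Delta_K$ by closedness and the only root of $\Delta_K$ with pairing $2$ against the long root $\beta_K$ is $\beta_K$ itself, the hypothesis $\gamma+\delta\in\Delta$ forces $\gamma+\delta=\beta_K$.

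Finally, for (2) I would split on the position of $K,L$ in the partial order. By (1) both $\gamma$ and $\delta$ are positive, so $\gamma+\delta$ lies in a unique $\Gamma_M$. If $K$ and $L$ are incomparable they descend from two distinct irreducible components of a common cascade subsystem (either $\Delta$ itself or some $\beta_M^{\perp}$), whose spans are orthogonal; then $\gamma+\delta$ would have nonzero projection onto two orthogonal component-spans of a closed subsystem and so could not be a root, contradicting the hypothesis. Hence $K,L$ are comparable. If $K\leq L$ then $\delta\in\Delta_L\subseteq\beta_K^{\perp}$, so $(\gamma+\delta,\beta_K)=(\gamma,\beta_K)>0$ with $\gamma+\delta\in\Delta_K$, giving $\gamma+\delta\in\Gamma_K$; the case $L\leq K$ is symmetric, and the diagonal case $K=L$ is covered by (4). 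The main obstacle is the bookkeeping of the inductive decomposition in (1) together with the closedness argument ruling out cross-component sums in the incomparable case of (2); once the long-root dichotomy $2(\gamma,\beta_K)/(\beta_K,\beta_K)\in\{-1,0,1\}$ is in place, items (3) and (4) are pure arithmetic.
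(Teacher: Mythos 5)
Your argument is correct, and where the paper actually gives a proof it runs on the same engine as yours: dominance of the highest root for item (1), the two-orthogonal-components obstruction for item (2), and the pairing computation $(\gamma+\delta,\beta_K)=(\beta_K,\beta_K)$ for item (4). The differences are in completeness, and they matter. The paper's proof of item (3) is simply absent (its list entry is left blank), so your derivation --- $(\gamma,\beta_K)>0$ plus the long-root dichotomy $2(\gamma,\beta_K)/(\beta_K,\beta_K)\in\{-1,0,1\}$ forces the value $1$, then Proposition 1(4) applied inside $\Delta_K$ gives $\beta_K-\gamma\in\Delta_K$, and the pairing $\tfrac12(\beta_K,\beta_K)$ puts it in $\Gamma_K$ --- supplies a proof the paper does not have. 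For item (2) the paper argues only the incomparable case (if the sum lies in neither $\Gamma_K$ nor $\Gamma_L$, the two sets come from distinct components of some $\Delta_N$, so $\gamma+\delta\notin\Delta$); it leaves unproved that comparability forces the sum into the right set, which is exactly your case $K\leq L$: $\delta\in\beta_K^{\perp}$ gives $(\gamma+\delta,\beta_K)=(\gamma,\beta_K)>0$, and closedness puts $\gamma+\delta$ in $\Delta_K$, hence in $\Gamma_K$. You are also right to isolate the closedness property $\Delta_K=\Delta\cap W$ explicitly: both your (2) and your (4) need it (in (4) one must know $\gamma+\delta\in\Delta_K$, not merely $\gamma+\delta\in\Delta$, before invoking the dichotomy, since $\beta_K$ is long only relative to $\Delta_K$, not to $\Delta$), and the paper uses it tacitly in both places. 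Two small points you could add in one sentence each: in the diagonal case $K=L$ of (2), item (4) only covers $\gamma,\delta\neq\beta_K$, but if say $\gamma=\beta_K$ then $(\gamma+\delta,\beta_K)=\tfrac32(\beta_K,\beta_K)$, impossible for a root of $\Delta_K$, so that subcase is vacuous; and the literal wording of (1) (``complement of $\beta_K^{\perp}$ in $\Delta_K$'') is off by the negative roots --- your reading $\Delta_K^{+}=\Gamma_K\sqcup(\beta_K^{\perp}\cap\Delta_K^{+})$ is the correct statement and is what the paper's own proof establishes.
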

\begin{proof}
\begin{enumerate}
\item[1]Let $K\in I(\Delta)$. Let $\alpha\in\Gamma_K$ then  $\alpha\in\Delta^{+}_K$ or $\alpha\in\Delta^{-}_K$. Since $\beta_K$ is the highest root of $\Delta_K$, then $\alpha\in\Delta^{+}_K$ because if $\alpha\in\Delta^{-}_K$, then we have $(\alpha,\beta_K)<0$. Then $\Gamma_K\subset\Delta^{+}_{K}$ and $\Delta^{+}_K=\Gamma_K\cup\beta_K^{\perp}$ and $\Gamma_K\cap\beta_K^{\perp}=\emptyset$. By construction of $I(\Delta)$, we have $\Delta^{+}=\cup_{K\in I(\Delta)}\Gamma_K$, $\Gamma_K\cap\Gamma_L=\emptyset$, for $K\neq L$.
\item[2] Let $\gamma\in\Gamma_K$ and $\delta\in\Gamma_L$ such that $\gamma+\delta\in\Delta$. We assume that $\gamma+\delta\Gamma_K$ nor $\gamma+\delta\Gamma_L$. Then $\Gamma_K$ and $\Gamma_L$ come from  two components of  a $\Delta_N$ where $N\in I(\Delta)$. Consequently $\gamma+\delta$ dot not below to $\Delta$.

 \item [3] 
\item [4] Let $\gamma,\delta\in\Gamma_K-\lbrace\beta_K\rbrace$ such that $\gamma+\delta\in\Delta$, then $(\beta_K,\gamma)=\frac{1}{2}(\beta_K,\beta_K)$ and $(\beta_K,\delta)=\frac{1}{2}(\beta_K,\beta_K)$. Then $(\beta_K,\delta+\gamma)=(\beta_K,\beta_K)$. By unicity of the highest root $\beta_K$, we have $\beta_K=\gamma+\delta$.
\end{enumerate}
\end{proof}

\begin{prop}
Let $P$ be a system of roots of the form $\beta^{\perp}$.
\begin{enumerate}
\item If $\gamma\in Q^{+}$, $\delta\in\Delta$ such that $\gamma+\delta\in\Delta^{+}$, then $\gamma+\delta\in Q^{+}$.
\item If $Q^{+}\cap\Gamma_K\neq \emptyset$, then $\beta_K\in Q^{+}$.
\item $I(Q^{+}):=\lbrace L\in I(\Delta):\beta_K\in Q^{+}\rbrace$ is a subset parabolic of $\Delta$.
\item Let $\gamma\in\Gamma_K(P)$, $\alpha\in Q^{+}$, then $\alpha+\gamma\in\Delta_{+}$, then $\alpha\in\Gamma_L$, with $L<K$. 
\end{enumerate}
\end{prop}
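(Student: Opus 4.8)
The whole proposition should be driven by the height function attached to $\beta$, namely $h(\alpha):=2(\alpha,\beta)/(\beta,\beta)=\langle\alpha,\beta^{\vee}\rangle$. This is additive on roots; by Proposition~1 the root $\beta$ is dominant, so $h$ is nonnegative on $\Delta^+$, with $h(\alpha)=0$ exactly when $\alpha\in P=\beta^\perp$. Hence $\Delta^+=P^+\sqcup Q^+$, where $P^+=P\cap\Delta^+$ is the level-$0$ part and $Q^+$ is the part of level $\geq 1$; for the top cascade element this is the decomposition $\Delta^+_K=\Gamma_K\cup\beta_K^\perp$ of the preceding Proposition. The only inputs I would use are: additivity and positivity of $h$; the block partition $\Delta^+=\bigsqcup_K\Gamma_K$; and the Heisenberg structure of each block (items~3--4 of the preceding Proposition).

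For (1), I would argue purely by levels. If $\gamma\in Q^+$ then $h(\gamma)\geq 1$; if moreover $\delta$ is a positive root then $h(\delta)\geq 0$, so whenever $\gamma+\delta\in\Delta^+$ we get $h(\gamma+\delta)\geq 1$, i.e. $\gamma+\delta\notin P$ and hence $\gamma+\delta\in Q^+$. The one point that must be handled honestly is that the statement has to be read with $\delta\in\Delta^+$: for a general $\delta\in\Delta$ a level-$1$ root can be knocked down into $P$ (already in type $A_3$, with $\beta=\alpha_1+\alpha_2+\alpha_3$, one has $(\alpha_1+\alpha_2)+(-\alpha_1)=\alpha_2\in P^+$ while $\alpha_1+\alpha_2\in Q^+$), so it is precisely the positivity of $\delta$—equivalently the dominance of $\beta$—that makes the closure work. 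I would write this step first, since (2)--(4) all feed on it.

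For (2) I would use the block structure. Pick $\gamma\in Q^+\cap\Gamma_K$; if $\gamma=\beta_K$ we are done, and otherwise item~3 of the preceding Proposition supplies the partner $\beta_K-\gamma\in\Gamma_K\subseteq\Delta^+$ with $\gamma+(\beta_K-\gamma)=\beta_K$, so applying (1) with the positive root $\delta=\beta_K-\gamma$ gives $\beta_K\in Q^+$. For (3), parabolicity is down-closedness for the partial order on $I(\Delta)$, so I must show that $\beta_L\in Q^+$ and $K\leq L$ force $\beta_K\in Q^+$. Here $K\leq L$ means $\beta_L$ is produced deeper in the cascade, inside the irreducible factor $\Delta_K$; thus $\beta_L$ is a positive root of $\Delta_K$ and is therefore dominated by its highest root, $\beta_K\geq\beta_L$. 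I would then take a saturated chain $\beta_L=\rho_0<\rho_1<\dots<\rho_m=\beta_K$ in the root poset of $\Delta_K$, each step adding a simple root of $\Delta_K$, and propagate membership in $Q^+$ up the chain by repeated use of (1). Transporting membership along the order in this way is the step I expect to carry the real content, since it forces one to combine the nesting of the $\Delta_K$ with the additivity of $h$ rather than work inside a single block.

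Finally, for (4) I would localize $\alpha$ in its block: by the partition there is a unique $L$ with $\alpha\in\Gamma_L$, and since $\alpha\in Q^+$ this $L$ is one of the upper blocks, not among the blocks of $P$. Given $\gamma\in\Gamma_K(P)\subseteq P$ with $\alpha+\gamma\in\Delta^+$, I would feed $\alpha\in\Gamma_L$ and $\gamma\in\Gamma_K$ into item~2 of the preceding Proposition to place $\alpha+\gamma$ in the lower of the two blocks, and then rule out $L\geq K$: were it to hold, the Heisenberg relations inside $\Delta_K$ together with additivity of the relevant height would push $\alpha+\gamma$ out of $\Delta^+$. Hence $L<K$, as claimed. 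The delicate bookkeeping is the comparison of the two cascades—that of $\Delta$ through $\alpha$ and that of $P$ through $\gamma$—so I would first fix the identification of $I(P)$ with the sub-poset $\{M\in I(\Delta):M>K_0\}$, where $P=\beta_{K_0}^\perp$, before running the case analysis; I expect this comparison to be the main obstacle of the whole proposition.
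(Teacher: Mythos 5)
First, a point of reference: the paper states this proposition with \emph{no proof at all} (it is followed immediately by the classification theorem), so there is no proof of record to compare yours against; what follows assesses your proposal on its own terms. Your reconstruction of the undefined symbol $Q^{+}$ as the level-$\geq 1$ part $\Delta^{+}\setminus P^{+}$ of the height $h(\cdot)=2(\cdot\,,\beta)/(\beta,\beta)$ is the reading under which the proposition is true, and items (1)--(3) are handled correctly. In particular, your observation that (1) is false as literally written and must be read with $\delta\in\Delta^{+}$ is a genuine correction to the paper (your $A_3$ example $(\alpha_1+\alpha_2)+(-\alpha_1)=\alpha_2\in P^{+}$ is valid); (2) follows, as you say, from item 3 of the preceding proposition together with (1); and the saturated-chain argument for (3) is sound, since every positive root of the irreducible system $\Delta_K$ can be raised to $\beta_K$ by successive addition of simple roots of $\Delta_K$, each of which is a positive root of $\Delta$, so (1) propagates membership in $Q^{+}$ up the chain.

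The gap is in your finish of (4). You exclude $L\geq K$ by asserting that ``the Heisenberg relations inside $\Delta_K$ together with additivity of the relevant height would push $\alpha+\gamma$ out of $\Delta^{+}$.'' That implication is false: two roots of comparable (even equal) blocks can perfectly well have a sum in $\Delta^{+}$. In $A_5$, with $\beta_{K_1}=\alpha_2+\alpha_3+\alpha_4$ the second cascade root, take $\gamma=\alpha_2\in\Gamma_{K_1}$ and $\alpha=\alpha_3+\alpha_4\in\Gamma_{K_1}$: then $\alpha+\gamma=\beta_{K_1}\in\Delta^{+}$ with $L=K$; similarly $\alpha=\alpha_3\in\Gamma_{K_2}$ and $\gamma=\alpha_2\in\Gamma_{K_1}$ give $\alpha+\gamma=\alpha_2+\alpha_3\in\Delta^{+}$ with $L>K$. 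So root arithmetic alone can never rule out $L\geq K$; what rules it out is precisely the hypothesis $\alpha\in Q^{+}$, which you place on the table (``$L$ is not among the blocks of $P$'') but do not use at the decisive moment. The correct finish is short: item 2 of the preceding proposition forces $K$ and $L$ to be comparable; if $L\geq K$ then $\alpha\in\Gamma_L\subseteq\Delta_L\subseteq\Delta_K\subseteq P$, so $\alpha\in P^{+}$, contradicting $\alpha\in Q^{+}=\Delta^{+}\setminus P^{+}$; hence $L<K$. (In both examples above $\alpha$ has level $0$, i.e. $\alpha\notin Q^{+}$ --- exactly the point.) With that substitution your outline of (4) closes; as written, it rests on a false step.
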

\begin{thm} Let $\g$ a simple Lie algebra. The maximum set of strongly roots of $\Delta$ is given by 
\begin{enumerate}
 \item $\g=A_r:$  $\sum_{k=j}^{r+1-j}\alpha_k$, $j=1...[\frac{r+1}{2}]$, $car(I)=[\frac{r+1}{2}]$\\
 \item$\g=C_r:$  $2\sum_{k=j}^{r-1}\alpha_k+\alpha_r$, $j=1...r$, $car(I)=r$
 \item $\g=E_6:$ $\alpha_1+2\alpha_2+2\alpha_3+3\alpha_4+2\alpha_5+\alpha_6$, $\alpha_1+\alpha_3+\alpha_4+\alpha_5+\alpha_6$,$\alpha_3+\alpha_4+\alpha_5$,$\beta_4=\alpha_4, card(I)=4$
 \item$\g=F_4:$ $2\alpha_1+3\alpha_2+4\alpha_3+2\alpha_4,\alpha_2+2\alpha_3+2\alpha_4,\alpha_2+2\alpha_3,\alpha_2,car(I)=4$
 \item $\g=G_2:$ $3\alpha_1+2\alpha_2$, $\alpha_1,car(I)=2$
 \item
 \begin{enumerate}
 \item $\g=B_{2r}:$, $\alpha_{2j-1}+2\sum_{j=2j}^{2r}\alpha_{i}$,  $\alpha_{2j-1}$, $j=1..r$, $car(I)=2r$.
 \item
  $\g=B_{2r+1}:$ $\alpha_{2j-1}+2\sum_{j=2j}^{2r}\alpha_{i}, j=1...r$,$\alpha_{2j-1}$, $j=1..r$,$\alpha_{2r+1}$, $car(I)=2r+1$
  \end{enumerate}
  \begin{enumerate}
  \item  $\g=D_{2r}:$ $\alpha_{2j-1}+2\sum_{2j}^{2r-2}\alpha_{i}+\alpha_{2r-1}+\alpha_{2r}$
$j=1...r-1$, $\beta_j=\alpha_{2j}$
\item $\g=D_{2r+1}:$ $\alpha_{2j-1}+2\sum_{2j}^{2r-2}\alpha_{i}+\alpha_{2r-1}+\alpha_{2r+1}$
$j=1...r-1$, 
\end{enumerate}
\item $\g=E_7:$ $2\alpha_1+2\alpha_2+3\alpha_3+4\alpha_4+3\alpha_5+2\alpha_6+\alpha_7$, $\alpha_2+\alpha_3+2\alpha_4+2\alpha_5+2\alpha_6+\alpha_7$, $\alpha_2+\alpha_3+2\alpha_4+\alpha_5$,$\alpha_3$,$\alpha_7$,$\alpha_5$,$\alpha_2$, $card(I)=7$.
\item $\g=E_8:$ $2\alpha_1+3\alpha_2+4\alpha_3+6\alpha_4+5\alpha_5+4\alpha_6+3\alpha_7+2\alpha_8$, $2\alpha_1+2\alpha_2+3\alpha_4+2\alpha_5+2\alpha_6+\alpha_7$, $ \alpha_2+\alpha_3+2\alpha_4+2\alpha_5+2\alpha_6+\alpha_7$, $\alpha_2+\alpha_3+2\alpha_4+\alpha_5$, $\alpha_3$,$\alpha_7$,$\alpha_5$,$\alpha_2$, $car(I)=8$.
\end{enumerate}
\end{thm}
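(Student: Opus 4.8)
The plan is to carry out explicitly, type by type, the recursive procedure (the Kostant cascade) introduced in the first Theorem on maximum sets of strongly orthogonal roots. Recall that the construction starts from the highest root $\beta$ of each irreducible component, passes to the orthogonal subsystem $\beta^{\perp}$, which by Proposition 3 is again the root system of a semisimple subalgebra $\g_{\beta}$, decomposes it into irreducible pieces, and iterates. Thus all that is really needed is, at each stage, (i) the highest root of a given irreducible system written in the ambient simple-root basis, and (ii) the decomposition of its orthogonal subsystem into irreducible factors.

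The key structural lemma I would isolate first is the following: for an irreducible system with highest root $\beta$ one has $(\beta,\alpha_i)\geq 0$ for every simple root $\alpha_i$ (item 1 of Proposition 1), so the simple roots split into those with $(\beta,\alpha_i)=0$ and the one or two nodes adjacent to the affine node with $(\beta,\alpha_i)>0$; moreover $\beta^{\perp}$ is precisely the subsystem generated by the simple roots orthogonal to $\beta$. Equivalently, the Dynkin diagram of $\beta^{\perp}$ is obtained from that of $\Delta$ by deleting the nodes $\alpha_i$ with $(\beta,\alpha_i)\neq 0$. Granting this, the whole computation reduces to reading off, at each step, a subdiagram of the current Dynkin diagram and the highest root of each of its connected components. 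I would prove the lemma by observing that $\beta^{\perp}$ is a Levi-type standard subsystem whose simple roots are exactly the $\alpha_i$ orthogonal to the dominant weight $\beta$.

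With the lemma in hand I would treat the families in turn. For $A_r$ the highest root is $\sum_{k=1}^{r}\alpha_k$, deleting the two end nodes leaves $A_{r-2}$ on $\alpha_2,\dots,\alpha_{r-1}$, and the recursion at once gives the nested roots $\sum_{k=j}^{r+1-j}\alpha_k$, of which there are $[\frac{r+1}{2}]$. For $C_r$ the highest root $2\alpha_1+\cdots+2\alpha_{r-1}+\alpha_r$ is orthogonal to a $C_{r-1}$, producing $r$ roots; for $B_r$ and $D_r$ deleting the node adjacent to the affine node splits off an $A_1$ together with a smaller $B$ or $D$ factor, which is exactly the interleaving of the long roots $\alpha_{2j-1}+2\sum\alpha_i$ with the short roots $\alpha_{2j-1}$ displayed in the statement. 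In each case the only genuine bookkeeping is re-expressing the highest root of a subsystem back in the original basis $\alpha_i$ and checking the final count $\mathrm{card}(I)$.

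The main obstacle, and where I would spend the most care, is the exceptional types $E_6,E_7,E_8,F_4,G_2$: here the successive orthogonal subsystems change type (for instance $E_8\to E_7\to D_6\to\cdots$ and $F_4\to C_3\to\cdots$), so one must track each intermediate Dynkin diagram, identify the highest root of every new component, and translate it into the $E_8$ (respectively $F_4$) simple roots, which is error-prone for the longest cascades. I would handle these by computing the inner products $(\beta,\alpha_i)$ directly from the Cartan matrix at each stage to decide which nodes to delete, and then verifying strong orthogonality of the resulting family via Proposition 2, so that the output is indeed a maximal strongly orthogonal set and matches the explicit lists given for $E_7$ (seven roots) and $E_8$ (eight roots).
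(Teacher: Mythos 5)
The paper contains no proof of this theorem at all: the lists are stated bare, as the output of the recursive construction in the first Theorem of Section 2 (the Kostant cascade), so your proposal is not paralleling an existing argument but supplying one where the paper has none. Your route is the standard and correct one. The key lemma you isolate is exactly what makes the cascade computable, and your dominance argument proves it: since the highest root $\beta$ satisfies $(\beta,\alpha_i)\geq 0$ for all $i$ (Proposition 1, item 1), a positive root $\alpha=\sum_i c_i\alpha_i$ with all $c_i\geq 0$ has $(\alpha,\beta)=\sum_i c_i(\alpha_i,\beta)$, so $(\alpha,\beta)=0$ forces $c_i=0$ whenever $(\alpha_i,\beta)>0$; hence $\beta^{\perp}$ is the standard subsystem on the simple roots orthogonal to $\beta$, i.e.\ its Dynkin diagram is that of $\Delta$ with the neighbours of the affine node deleted. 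Your classical-type computations ($A_r$ losing both end nodes and nesting down to $[\frac{r+1}{2}]$ roots, $C_r$ losing one node and giving $r$ roots, $B_r$ and $D_r$ splitting off an $A_1$ together with a system of rank two lower) are correct and reproduce the stated lists and cardinalities.

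Two caveats, one of which is a genuine incompleteness. First, for $E_6,E_7,E_8,F_4,G_2$ you describe the method but do not run it; since the entire content of the theorem is the explicit lists, the proof is finished only once the chains $E_6\supset A_5\supset A_3\supset A_1$, $E_7\supset D_6\supset A_1\times D_4\supset A_1^3$, $E_8\supset E_7\supset\cdots$, $F_4\supset C_3\supset C_2\supset C_1$, $G_2\supset A_1$ are written out with every highest root re-expressed in the original simple roots. Carrying this out would in fact expose errors in the paper's own lists: for instance the second $E_8$ root must be the highest root of the $E_7$ subsystem, namely $2\alpha_1+2\alpha_2+3\alpha_3+4\alpha_4+3\alpha_5+2\alpha_6+\alpha_7$, not the expression printed, and the $D$-series entries as printed are garbled and incomplete. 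Second, a misattribution: you propose to verify strong orthogonality of the resulting family "via Proposition 2", but Proposition 2 goes the wrong way (strongly orthogonal implies orthogonal). What you need is Proposition 3, item 1: any root orthogonal to the highest root is automatically strongly orthogonal to it; applied at each stage of the cascade this gives pairwise strong orthogonality of the whole family, and maximality is then the content of the first Theorem of Section 2, not something to check by hand. A last harmless slip: in type $B$ the roots $\alpha_{2j-1}=\varepsilon_{2j-1}-\varepsilon_{2j}$ that you call short are long; only $\alpha_{2r+1}$ in $B_{2r+1}$ is short.
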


\section{Index of Borel Lie Algebras}
Let $\g$ a Lie algebra over $\K$ and $\g^*$ its dual. Let $f$ be a element of $\g^*$, a form linear on $\g$. We note by $B_f$ the antisymetric bilinear form on $\g$ defined by:
\\ $$(X,Y)\mapsto f([X,Y])$$
The dual space $\g^*$ has a natural representation called coadjoint represenation
$$ ad^*_X:f\mapsto-B_f(X,.)$$
\begin{de}
The index of the form $B_f$ is defined by the formula:
$$\bold{i}(B_f)=\dim(KerB_f)$$

\end{de}
\begin{de} 

The index of a Lie algebra $\g$ is defined by
-$$ \bold{i}(\g)=\min_{f\in\g^*}(\bold{B_f})$$
One said that the form $f$ is regular if $\bold{i}(B_f))=\bold{i}(\g)$. We denote $\g^*_{reg}$ the set of regular forms on $\g$.
\end{de}
 \begin{prop}The set of regular forms $\g_{reg}^*$ is Zariski open.
 \end{prop}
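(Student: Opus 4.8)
The plan is to reduce the statement to the lower semicontinuity of the rank of a matrix whose entries depend polynomially on $f$. First I would fix a basis $(e_1,\dots,e_n)$ of $\g$, where $n=\dim\g$, and let $c_{ij}^k$ be the structure constants, so that $[e_i,e_j]=\sum_k c_{ij}^k e_k$. Writing $f\in\g^*$ in the dual coordinates $x_k=f(e_k)$, the matrix of the alternating form $B_f$ in the basis $(e_i)$ has entries
$$M(f)_{ij}=f([e_i,e_j])=\sum_k c_{ij}^k\, x_k,$$
which are linear forms in $x_1,\dots,x_n$. Thus $f\mapsto M(f)$ is a linear map from $\g^*$ into the space of $n\times n$ antisymmetric matrices, and $\mathbf{i}(B_f)=\dim\ker B_f=n-\operatorname{rank}M(f)$.

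Next I would translate the minimality of the index into a maximality of the rank. By definition $\mathbf{i}(\g)=\min_{f}\mathbf{i}(B_f)=n-\max_f \operatorname{rank}M(f)$; set $r:=\max_f\operatorname{rank}M(f)$. Then $f$ is regular precisely when $\operatorname{rank}M(f)=r$, and since $r$ is the maximal value attained, this is equivalent to $\operatorname{rank}M(f)\geq r$. Hence
$$\g^*_{reg}=\{\,f\in\g^*:\operatorname{rank}M(f)\geq r\,\}.$$

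Finally I would exhibit $\g^*_{reg}$ as the complement of a Zariski-closed set. The condition $\operatorname{rank}M(f)<r$ is equivalent to the simultaneous vanishing of every $r\times r$ minor of $M(f)$. Each such minor is a polynomial, indeed homogeneous of degree $r$, in the coordinates $x_1,\dots,x_n$, because the entries of $M(f)$ are linear in these coordinates; therefore the set $\{\,f:\operatorname{rank}M(f)<r\,\}$ is the common zero locus of a finite family of polynomials, hence Zariski closed. Its complement $\g^*_{reg}$ is thus Zariski open, which is the claim; it is moreover nonempty, being the locus where the rank attains its maximum.

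There is no genuine obstacle here: the result is simply the lower semicontinuity of matrix rank applied to the linear family $f\mapsto B_f$. The only point demanding care is the observation that "minimal index" coincides with "maximal rank," so that the open condition $\operatorname{rank}\geq r$ and the closed condition it complements are correctly matched; once $r$ is chosen to be the maximal attained rank, the equality $\{\operatorname{rank}\geq r\}=\{\operatorname{rank}=r\}=\g^*_{reg}$ is immediate.
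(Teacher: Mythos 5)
Your proof is correct. The paper states this proposition without any proof at all, so there is no argument to compare against; your reduction of minimal index to maximal rank of the linear matrix family $f\mapsto M(f)$, followed by the observation that the locus $\{\operatorname{rank}M(f)<r\}$ is the common zero set of the $r\times r$ minors (polynomials in the coordinates $x_1,\dots,x_n$), is exactly the standard justification the paper implicitly relies on, and it is complete: the identification $\mathbf{i}(B_f)=n-\operatorname{rank}M(f)$, the equality $\{\operatorname{rank}\geq r\}=\{\operatorname{rank}=r\}=\g^*_{reg}$ for $r$ the maximal attained rank, and the nonemptiness remark are all correctly handled.
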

 \begin{de}A Lie algebra $\g$ is called Frobenuis if $\bold{i}(\g)=0$
 \end{de}
 \begin{prop}Let $\g$ a Lie algebra. One assume that $\g$ be the product of $\g_1, g_2,..,\g_r$ of Lie algebras.
 \\
 A form $f=(f_1,..,f_r)$ is a regular form of $\g^*$ if only if $f_i$ is regular of $\g_i^*$.
 \end{prop}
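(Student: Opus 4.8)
The plan is to reduce everything to the observation that the antisymmetric form $B_f$ respects the product decomposition. First I would write $\g=\g_1\oplus\cdots\oplus\g_r$ as a direct sum of ideals with $[\g_i,\g_j]=0$ for $i\neq j$, so that for $X=(X_1,\dots,X_r)$ and $Y=(Y_1,\dots,Y_r)$ the bracket is $[X,Y]=([X_1,Y_1],\dots,[X_r,Y_r])$. Writing $f=(f_1,\dots,f_r)$ under the identification $\g^*=\g_1^*\oplus\cdots\oplus\g_r^*$, this gives $B_f(X,Y)=\sum_i f_i([X_i,Y_i])=\sum_i B_{f_i}(X_i,Y_i)$, so that $B_f$ is block diagonal with blocks $B_{f_i}$.

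From this block structure the kernel splits: $\ker B_f=\ker B_{f_1}\oplus\cdots\oplus\ker B_{f_r}$, because a vector $X$ is annihilated by $B_f$ against all of $\g$ precisely when each component $X_i$ is annihilated by $B_{f_i}$ against all of $\g_i$. Taking dimensions yields the additivity $\mathbf{i}(B_f)=\sum_i\mathbf{i}(B_{f_i})$.

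Next I would minimize over $f$. Since the components $f_i$ range independently over $\g_i^*$, the minimum of the sum is the sum of the minima, giving $\mathbf{i}(\g)=\sum_i\mathbf{i}(\g_i)$. Finally, to obtain the equivalence, I use the defining inequality $\mathbf{i}(B_{f_i})\geq\mathbf{i}(\g_i)$ valid for every $i$: the equality $\sum_i\mathbf{i}(B_{f_i})=\sum_i\mathbf{i}(\g_i)$ can hold only when each of these inequalities is an equality, that is, when every $f_i$ is regular on $\g_i^*$; conversely, if each $f_i$ is regular the two sums coincide and hence $f$ is regular on $\g^*$.

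The argument is essentially bookkeeping; the only point that requires care is the clean identification of $\g^*$ with $\g_1^*\oplus\cdots\oplus\g_r^*$ together with the verification that $B_f$ carries no cross terms, which is exactly where the hypothesis $[\g_i,\g_j]=0$ enters. I expect no genuine obstacle beyond keeping the indexing straight and invoking $\mathbf{i}(B_{f_i})\geq\mathbf{i}(\g_i)$ at the right moment.
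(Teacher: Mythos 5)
Your proof is correct. The paper actually states this proposition without any proof at all, so there is no argument to compare against; your reasoning---the block-diagonal decomposition $B_f=\bigoplus_i B_{f_i}$ coming from $[\g_i,\g_j]=0$, the resulting additivity $\mathbf{i}(B_f)=\sum_i\mathbf{i}(B_{f_i})$ and $\mathbf{i}(\g)=\sum_i\mathbf{i}(\g_i)$, and the termwise-equality argument via $\mathbf{i}(B_{f_i})\geq\mathbf{i}(\g_i)$---is the standard and surely intended one, and it cleanly fills the gap the paper leaves open.
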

 
 Let $\bold{g}$ be a Lie semisimple Lie Algebra. Let $\bold{b}$ a Borel Lie subalgebra of $\g$ and $\bold{n}$ its nilradical. Let $\bold{p}$ a parabolic subalgebra of $\g$ containing $\bold{b}$ and $\bold{m}$ its nilradical. Let $Q^{+}$ a subset of $\Delta_{+}$ such that $\bold{m}=\g^{Q{+}}=\lbrace X_{\alpha}:\alpha\in Q^{+}\rbrace$. Let $F=(\beta_k)_{k\in{I(\Delta)}}$ a maximum set of stronly orthogonal roots. Let $H_L$ the coroot of $\beta_L$, $L\in\Delta$.\\
 We set
$$I(Q^{+})=\lbrace L\in I(\Delta_{+}):\beta_L\in Q^{+}\rbrace ,\bold{h}_{I(Q^{+})}=\lbrace H_{L}:L\in I(Q^{+})\rbrace$$ $$\Gamma_L=\lbrace \gamma\in\Delta:(\gamma,\beta_L)> 0\rbrace,\bold{m}_{I(Q^{+})}=\bigoplus_{L\in I(Q^{+})}\g^{\Gamma_L}$$ $$\bold{d}_{\bold{m}}=\bold{h}_{I(Q^{+})}\oplus\bold{m}_{I(Q^{+})}$$ \\
It is easy to verify that $\bold{d}_{\bold{m}}$ is a subalgebra of $\g$ which is in $\bold{b}$, and $[\bold{d}_{\bold{m}},\bold{m}]\subset \bold{m}.$
\begin{thm}Let $\g$ be a semisimple Lie algebra $\g$, $\bold{b}$ a Borel Lie subalgebra of $\g$,  $\bold{p}$ be a parabolic subalgebra of $\g$ containing the Borel subalgebra $\bold{b}$, and $\bold{m}$ its nilradical. Then
\item[1.] The subalgebra $\bold{d}_{\bold{m}}$ is an ideal of $\bold{b}$ and its nilradical $\bold{m}_{I(Q^{+})}$ containing $\bold{m}$.
\item [2.] The subalgbra $\bold{d}_{\bold{m}}$ is Frobenuis Lie algebra.
\end{thm}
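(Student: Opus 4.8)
The plan is to exhibit an explicit Frobenius functional on $\mathbf{d}_{\mathbf{m}}$ and to verify the ideal property by a root–by–root bracket computation, relying on the cascade combinatorics (the disjointness of the $\Gamma_K$, the parabolicity of $I(Q^+)$, and the sum rule of the proposition on $\Gamma_K$). For the ideal property (part 1) I would check $[\mathbf{b},\mathbf{d}_{\mathbf{m}}]\subseteq\mathbf{d}_{\mathbf{m}}$ on the three pieces $[\mathbf{h},\cdot]$, $[\n_+,\mathbf{m}_{I(Q^+)}]$ and $[\n_+,\mathbf{h}_{I(Q^+)}]$. The first is immediate, since $\mathbf{h}$ is abelian and hence kills $\mathbf{h}_{I(Q^+)}$, while $[\mathbf{h},\g^{\gamma}]=\g^{\gamma}$ leaves $\mathbf{m}_{I(Q^+)}$ stable. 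For $[\n_+,\mathbf{m}_{I(Q^+)}]$, take $\delta\in\Delta^{+}$ and $\gamma\in\Gamma_L$ with $L\in I(Q^+)$; if $\gamma+\delta\in\Delta$, writing $\delta\in\Gamma_{L'}$ (uniquely, by disjointness of the $\Gamma_K$) the sum rule gives $\gamma+\delta\in\Gamma_L$ or $\gamma+\delta\in\Gamma_{L'}$ with $L,L'$ comparable; since $I(Q^+)$ is parabolic and $L\in I(Q^+)$, the smaller index again lies in $I(Q^+)$, so $\gamma+\delta\in\mathbf{m}_{I(Q^+)}$.

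The delicate piece is $[\n_+,\mathbf{h}_{I(Q^+)}]$. As $[X_\delta,H_L]=-\delta(H_L)X_\delta$, I must show that $\delta(H_L)\neq 0$ for some $L\in I(Q^+)$ forces $X_\delta\in\mathbf{m}_{I(Q^+)}$. I would argue contrapositively: if $\delta$ lies in $\Gamma_{M}$ with $M\notin I(Q^+)$, then $M$ cannot be an ancestor of any $L\in I(Q^+)$ (parabolicity would put $M$ in $I(Q^+)$), nor a descendant, nor in a sibling branch, since in the latter two cases $\Delta_M\subseteq\beta_L^{\perp}$ and hence $(\delta,\beta_L)=0$. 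Thus $\delta(H_L)=0$ for every $L\in I(Q^+)$ and the bracket vanishes. Closure of $\mathbf{m}_{I(Q^+)}$ under bracket (so that it is the nilradical) follows from the same sum rule, and the inclusion $\mathbf{m}\subseteq\mathbf{m}_{I(Q^+)}$ follows because each $\gamma\in Q^{+}$ lies in a unique $\Gamma_K$ with $Q^{+}\cap\Gamma_K\neq\emptyset$, whence $\beta_K\in Q^{+}$ and $K\in I(Q^+)$.

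For the Frobenius property (part 2) I would take the functional $f=\sum_{L\in I(Q^+)}X_{\beta_L}^{*}$, that is, $f(X_{\beta_L})=1$ and $f$ vanishes on $\mathbf{h}_{I(Q^+)}$ and on every $\g^{\gamma}$ with $\gamma$ not a cascade root, and show $\ker B_f=0$ by organizing $B_f$ into blocks. Strong orthogonality gives $\beta_M(H_L)=2\delta_{LM}$, so $B_f(H_L,X_{\beta_M})=2\delta_{LM}$ and $B_f(H_L,H_M)=0$, while $\beta_L\pm\beta_M\notin\Delta$ makes $B_f$ vanish between distinct highest-root vectors and between a highest-root vector and any non-highest $X_\gamma$; the pairings $B_f(H_L,X_\gamma)$ for non-cascade $\gamma$ vanish because $f(X_\gamma)=0$. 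Within a single $\g^{\Gamma_L}$, the involution $\gamma\mapsto\beta_L-\gamma$ on $\Gamma_L$ together with $[\g^{\gamma},\g^{\delta}]\subseteq\g^{\beta_L}$ being nonzero only when $\gamma+\delta=\beta_L$ realizes the nondegenerate symplectic form of a Heisenberg algebra, and the sum rule shows there is no $B_f$-pairing between $\g^{\Gamma_L}$ and $\g^{\Gamma_{L'}}$ for $L\neq L'$. Hence $B_f$ is block diagonal with a nondegenerate $\{H_L,X_{\beta_L}\}$-block and nondegenerate Heisenberg blocks, so $B_f$ is nondegenerate and $\mathbf{i}(\mathbf{d}_{\mathbf{m}})=0$.

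I expect the bracket bookkeeping in part 1 — specifically ruling out that $[\n_+,\mathbf{h}_{I(Q^+)}]$ escapes $\mathbf{m}_{I(Q^+)}$ — to be the main obstacle, because it requires a clean description of the sign of $(\delta,\beta_L)$ as $\delta$ and $L$ range over different nodes of the cascade tree, and in particular the orthogonality of subsystems attached to incomparable nodes. Once that is in place, the Frobenius conclusion in part 2 is a formal consequence of the Heisenberg plus strong-orthogonality structure.
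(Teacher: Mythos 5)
The paper states this theorem without any proof --- the only argument actually written out in the paper is the partial proof of the proposition on the sets $\Gamma_K$ --- so there is no author's argument to compare yours against; your proposal has to be judged on its own, against the toolkit the paper develops. On that footing it is essentially correct, and it is visibly the intended argument: you use exactly the ingredients the paper sets up (the disjoint decomposition $\Delta^{+}=\bigcup_{K}\Gamma_K$, the sum rule for $\gamma\in\Gamma_K$, $\delta\in\Gamma_L$ with comparable indices, parabolicity of $I(Q^{+})$, the involution $\gamma\mapsto\beta_K-\gamma$ on $\Gamma_K\setminus\lbrace\beta_K\rbrace$, and strong orthogonality of the cascade roots), and your Frobenius functional $f=\sum_{L\in I(Q^{+})}X^{*}_{\beta_L}$ is precisely the form that the paper itself asserts (again without proof) to be regular on $\mathbf{b}^{*}$ in the following theorem. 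Your treatment of the delicate bracket $[\n_+,\mathbf{h}_{I(Q^{+})}]$ is right: $M\leq L$ is excluded by parabolicity, and in the two remaining positions of $M$ relative to $L$ (descendant or incomparable) one has $\Delta_M\subseteq\beta_L^{\perp}$, hence $\delta(H_L)=0$.

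Two steps should be tightened. First, the vanishing of $B_f$ between a cascade vector $X_{\beta_L}$ and a non-cascade vector $X_\gamma$, $\gamma\in\Gamma_M\setminus\lbrace\beta_M\rbrace$, does not follow from ``$\beta_L\pm\beta_M\notin\Delta$''; that only covers pairs of cascade vectors. What you need is: if $\beta_L+\gamma\in\Delta$, then $f([X_{\beta_L},X_\gamma])\neq 0$ would force $\beta_L+\gamma=\beta_N$ for some $N\in I(Q^{+})$; by the sum rule $\beta_L+\gamma$ lies in $\Gamma_L$ or $\Gamma_M$, so disjointness forces $N=L$ or $N=M$; the case $N=L$ gives $\gamma=0$, and the case $N=M$ gives $\beta_L=\beta_M-\gamma\in\Gamma_M$ by the involution, hence $L=M$ and again $\gamma=0$, both absurd. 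The same argument is what guarantees that your pairing of the kernel element with $X_{\beta_M-\delta}$ picks out only the coefficient of $X_\delta$, so it is worth isolating as a lemma rather than folding it into ``strong orthogonality''. Second, closure under bracket makes $\mathbf{m}_{I(Q^{+})}$ a nilpotent ideal of $\mathbf{d}_{\mathbf{m}}$, but not yet the nilradical: maximality needs the remark that $\mathbf{h}_{I(Q^{+})}$ acts faithfully and semisimply on $\mathbf{m}_{I(Q^{+})}$ --- indeed $\beta_M\bigl(\sum_L a_L H_L\bigr)=2a_M$ by strong orthogonality --- so no element with a nonzero Cartan component can lie in a nilpotent ideal. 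With these two patches your proof is complete and supplies an argument the paper omits.
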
Let $\g$ be a semisimple Lie algebra $\g$, $\bold{b}$ a Borel Lie subalgebra of $\g$,  $\bold{p}$ be a parabolic subalgebra of $\g$ containing the Borel subalgebra $\bold{b}$, and $\bold{m}$ its nilradical. Let $Aut(\g)$ be the group of automorphisms of $\g$. The subgroup of  $Aut(\g)$ genereted by the automorphisms $e^{adX}$, where $X$ is an element of $\g$ such that $ad X$ is nilpotent. This subgroup is denoted by $\bold{G}$. 
\begin{thm}Let $\g$ be a simple Lie algebra. Let $\bold{b}$ a Borel Lie algebra of $\g$. Let $(\beta_k)_{k\in I(\g)}$ be the maximum set of stronlgy roots. Let $i(\bold{b})$ be the index of $\bold{b}$. Then
 $f=\sum_{k\in I(\g)}X^{*}_{\beta_{k}}$ is a regular form of $\bold{b}^{*}_{reg}$. 
\end{thm}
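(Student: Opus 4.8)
The plan is to write the alternating form $B_f$ explicitly in the Weyl basis $\{H_i\}\cup\{X_\alpha:\alpha\in\Delta_+\}$ of $\mathbf{b}=\mathbf{h}\oplus\n_+$ and read off its kernel. Since $f=\sum_{k\in I(\g)}X^{*}_{\beta_k}$, one has $f(X_\alpha)=1$ exactly when $\alpha$ is one of the cascade roots $\beta_k$, and $f=0$ on $\mathbf{h}$ and on every other $X_\alpha$. Using $[H,X_\alpha]=\alpha(H)X_\alpha$ and $[X_\alpha,X_\gamma]=N_{\alpha,\gamma}X_{\alpha+\gamma}$, the only nonzero entries of $B_f$ are $B_f(H,X_{\beta_k})=\beta_k(H)$ and $B_f(X_\gamma,X_\delta)=N_{\gamma,\delta}$ whenever $\gamma+\delta=\beta_k$ for some $k$. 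Thus $B_f$ is governed entirely by how positive roots add up to a cascade root, and the combinatorial input will come from the Proposition on $\Gamma_K$ together with strong orthogonality.

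First I would determine $\ker B_f$. Writing $Z=\sum_i c_iH_i+\sum_\alpha d_\alpha X_\alpha$ and setting $H_c=\sum_i c_iH_i$: pairing $Z$ against $\mathbf{h}$ gives $\sum_k d_{\beta_k}\beta_k(H)=0$ for all $H$, and since the $\beta_k$ are pairwise orthogonal (hence linearly independent) this forces $d_{\beta_k}=0$. Pairing $Z$ against $X_{\beta_k}$ gives $\beta_k(H_c)+\sum_{\alpha+\beta_k\in\{\beta_m\}}d_\alpha N_{\alpha,\beta_k}=0$; strong orthogonality of the cascade means $\beta_m-\beta_k\notin\Delta$, so the sum is empty and $\beta_k(H_c)=0$ for every $k$, i.e. $H_c$ lies in $\mathbf{h}':=\{H\in\mathbf{h}:\beta_k(H)=0\ \forall k\}$. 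The cleanest way to kill the remaining $\n_+$-coefficients is the splitting $\mathbf{b}=\mathbf{h}'\oplus\mathbf{d}_{\mathbf{m}}$, where $\mathbf{d}_{\mathbf{m}}=\mathbf{h}_{I(\Delta)}\oplus\n_+$ is the ideal of the previous Theorem taken with $\mathbf{m}=\n_+$; the decomposition is a genuine direct sum because $\mathbf{h}'$ and the span of the coroots $H_{\beta_k}$ meet only in $0$ and have complementary dimensions. Since $f$ vanishes on $\mathbf{h}'$ and on every $X_\alpha$ with $\alpha\notin\{\beta_k\}$, while $\beta_k(\mathbf{h}')=0$, one checks $B_f(\mathbf{h}',\mathbf{b})=0$, so $\mathbf{h}'\subseteq\ker B_f$; and $B_f$ restricted to $\mathbf{d}_{\mathbf{m}}$ is exactly the Frobenius functional exhibited in the proof of that Theorem, hence nondegenerate. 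It follows that $\ker B_f=\mathbf{h}'$, so $\dim\ker B_f=\dim\mathbf{h}-|I(\g)|$.

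It remains to see that this value is minimal, i.e. that $i(\mathbf{b})=\dim\mathbf{h}-|I(\g)|$, and this is where the real work lies. The inequality $i(\mathbf{b})\le\dim\mathbf{h}-|I(\g)|$ is immediate from the kernel computation. The reverse inequality is the main obstacle, because for a generic $g\in\mathbf{b}^{*}$ the space $\mathbf{h}'$ need not sit inside $\ker B_g$ and $g|_{\mathbf{d}_{\mathbf{m}}}$ need not be regular, so no term-by-term comparison is available. I would derive it from the Frobenius ideal $\mathbf{d}_{\mathbf{m}}$: as $\mathbf{d}_{\mathbf{m}}$ is an ideal of codimension $\dim\mathbf{h}-|I(\g)|$ carrying a nondegenerate generic form, and the quotient $\mathbf{b}/\mathbf{d}_{\mathbf{m}}\cong\mathbf{h}'$ is abelian, a reduction along the restriction map $\mathbf{b}^{*}\to\mathbf{d}_{\mathbf{m}}^{*}$ bounds every coadjoint orbit by $\dim\n_+ +|I(\g)|$, yielding $i(\mathbf{b})\ge\dim\mathbf{h}-|I(\g)|$; alternatively one may simply match $\dim\ker B_f$ against the index value read off from the explicit cascade of the classification Theorem. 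Either way the two bounds coincide, so $B_f$ attains maximal rank, $f\in\mathbf{b}^{*}_{reg}$, and $i(\mathbf{b})=\dim\mathbf{h}-|I(\g)|$; in particular $\mathbf{b}$ is Frobenius precisely when the cascade has $\dim\mathbf{h}$ elements, recovering the list of the Introduction.
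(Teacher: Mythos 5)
The paper states this theorem with no proof at all (as it does the preceding theorem on $\mathbf{d}_{\mathbf{m}}$), so your proposal has to stand on its own. Judged that way, your skeleton is the right one --- compute $\ker B_f$ via the splitting $\mathbf{b}=\mathbf{h}'\oplus\mathbf{d}_{\mathbf{m}}$ with $\mathbf{m}=\n_+$ --- and the computations forcing $d_{\beta_k}=0$ and $H_c\in\mathbf{h}'$ are correct. But two essential steps are asserted rather than proved. First, the claim that ``$B_f$ restricted to $\mathbf{d}_{\mathbf{m}}$ is exactly the Frobenius functional exhibited in the proof of that Theorem'' cites a proof that does not exist: the paper only asserts that $\mathbf{d}_{\mathbf{m}}$ is Frobenius, i.e.\ that \emph{some} form on it is nondegenerate, which says nothing about this particular $f$. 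That nondegeneracy is the heart of the theorem and must be shown. It can be, using the Proposition on the $\Gamma_K$: a non-cascade positive root $\alpha$ lies in a unique $\Gamma_K$, and $\gamma=\beta_K-\alpha\in\Gamma_K\setminus\lbrace\beta_K\rbrace$ by part 3; by parts 2 and 4 and the disjointness of the $\Gamma$'s, the only positive root $\alpha'$ with $\alpha'+\gamma$ equal to a cascade root is $\alpha'=\alpha$; so pairing a putative kernel element with $X_{\beta_K-\alpha}$ isolates $d_\alpha N_{\alpha,\beta_K-\alpha}$, and $N_{\alpha,\beta_K-\alpha}\neq 0$ forces $d_\alpha=0$. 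Without this you have only shown $\mathbf{h}'\subseteq\ker B_f$, not equality.

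Second, the minimality claim $i(\mathbf{b})\geq\dim\mathbf{h}-|I(\g)|$ rests on a ``reduction'' principle you never prove, and the mechanism as you state it --- an ideal with abelian quotient bounds every coadjoint orbit by the dimension of the ideal --- is false without the Frobenius hypothesis: in the Heisenberg algebra the centre is an ideal with abelian quotient, yet the generic coadjoint orbit has dimension $2>1$. The statement you actually need is: if $\got{a}$ is a \emph{Frobenius} ideal of a Lie algebra $\got{q}$ with $[\got{q},\got{q}]\subseteq\got{a}$, then $i(\got{q})=\dim\got{q}-\dim\got{a}$. This is true, but its proof contains an idea absent from your sketch: for $g\in\got{q}^*$ with $g|_{\got{a}}$ regular, $B_g|_{\got{a}}$ is nondegenerate, so $\got{q}=\got{a}\oplus\got{a}^{\perp}$ where $\got{a}^{\perp}$ is the $B_g$-orthogonal of $\got{a}$; by Jacobi and the ideal property $\got{a}^{\perp}$ is a subalgebra, and $[\got{a}^{\perp},\got{a}^{\perp}]\subseteq\got{a}\cap\got{a}^{\perp}=0$, so $\got{a}^{\perp}$ is totally isotropic and $\dim\ker B_g=\dim\got{q}-\dim\got{a}$; one then intersects the two dense open sets $\lbrace g:g|_{\got{a}}\ \mathrm{regular}\rbrace$ and $\got{q}^{*}_{reg}$ to conclude. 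Applied to $\got{a}=\mathbf{d}_{\mathbf{m}}$, $\got{q}=\mathbf{b}$ (note $[\mathbf{b},\mathbf{b}]=\n_+\subseteq\mathbf{d}_{\mathbf{m}}$), this yields the reverse inequality and, combined with the kernel computation, the regularity of $f$. Your fallback --- reading the index off the classification Theorem --- is circular within this paper, since the index values appear only in the theorem that follows and are intended to be consequences of the present one.
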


\begin{thm}Let $\g$ be a simple Lie algebra. Let $\bold{b}$ a Borel Lie algebra of $\g$. Let $i(\bold{b})$ be the index of $\bold{b}$. Then
\begin{enumerate}
\item if $\g$ is type $A_r$, $i(\bold{b})=[\frac{r}{2}]$;
\item if $\g$ is type $B_r,C_r,D_{2r},E_8,E_7,F_4,G_2$,$i(\bold{b})=0$;
\item if $\g$ is type $D_{2r+1}$, $i(\bold{b})=1$;
\item if $\g$ is type $E_6$, $i(\bold{b})=2$.
\end{enumerate}
\end{thm}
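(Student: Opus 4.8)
The plan is to exploit the preceding theorem, which guarantees that $f=\sum_{k\in I(\g)}X^{*}_{\beta_k}$ is a regular form on $\bold{b}$. Consequently $i(\bold{b})=\dim\ker B_f$, and the whole computation reduces to analysing the antisymmetric form $B_f(X,Y)=f([X,Y])$ in the root basis $\{H_1,\dots,H_r\}\cup\{X_\alpha:\alpha\in\Delta_+\}$ of $\bold{b}$. Reading off the bracket relations, one has $B_f(H_i,H_j)=0$, $B_f(H_i,X_\alpha)=\alpha(H_i)$ when $\alpha=\beta_k$ for some $k$ and $0$ otherwise, and $B_f(X_\alpha,X_\gamma)=N_{\alpha,\gamma}$ precisely when $\alpha+\gamma=\beta_k$ for some $k$ (and $0$ in all other cases).

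The key reduction is to split $\bold{b}=V_{\mathrm{core}}\oplus V_{\mathrm{pair}}$, where $V_{\mathrm{core}}=\bold{h}\oplus\bigoplus_{k}\K X_{\beta_k}$ and $V_{\mathrm{pair}}$ is spanned by the remaining root vectors $X_\gamma$ with $\gamma\in\Gamma_K\setminus\{\beta_K\}$. Here I would lean on the proposition describing the blocks $\Gamma_K$. Since $\Delta_+=\bigsqcup_K\Gamma_K$ and, by part (3), each $\gamma\in\Gamma_K\setminus\{\beta_K\}$ has partner $\beta_K-\gamma\in\Gamma_K$ with $[X_\gamma,X_{\beta_K-\gamma}]=N_{\gamma,\beta_K-\gamma}X_{\beta_K}$, the restriction $B_f|_{V_{\mathrm{pair}}}$ is block diagonal with nondegenerate $2\times2$ symplectic blocks $\left(\begin{smallmatrix}0&N\\-N&0\end{smallmatrix}\right)$, one per partner pair; part (4) ensures two non-central roots of the same block can only sum to $\beta_K$, so no other entries appear. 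I would then verify that $V_{\mathrm{core}}$ and $V_{\mathrm{pair}}$ are $B_f$-orthogonal: $B_f(H_i,X_\gamma)=0$ for non-central $\gamma$ because such a $\gamma$ is never a $\beta_k$, while $B_f(X_{\beta_K},X_\gamma)\neq0$ would force $\beta_K+\gamma=\beta_m$, impossible by part (2) combined with the highest-root property of $\beta_K$ and strong orthogonality (which excludes $\beta_L-\beta_K\in\Delta$). Hence $\ker B_f\subseteq V_{\mathrm{core}}$.

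On $V_{\mathrm{core}}$ the form takes the shape $\left(\begin{smallmatrix}0&M\\-M^{\top}&0\end{smallmatrix}\right)$ with $M$ the $r\times\mathrm{card}(I)$ matrix $M_{iK}=\beta_K(H_i)$, since $[\bold{h},\bold{h}]=0$ and $[X_{\beta_K},X_{\beta_L}]=0$ (again $\beta_K+\beta_L\notin\Delta$ by strong orthogonality). A direct computation gives $\dim\ker=(r-\operatorname{rank}M)+(\mathrm{card}(I)-\operatorname{rank}M)$. As the $H_i$ form a basis of $\bold{h}$, $\operatorname{rank}M=\dim\operatorname{span}\{\beta_K\}$; and the $\beta_K$ are pairwise orthogonal (the proposition on strongly orthogonal roots), hence linearly independent, so $\operatorname{rank}M=\mathrm{card}(I)$. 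This yields the clean formula
$$i(\bold{b})=r-\mathrm{card}(I(\g)).$$

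Finally I would substitute the cardinalities furnished by the theorem listing the maximal strongly orthogonal sets: $\mathrm{card}(I)=\operatorname{rank}$ for $B_r,C_r,D_{2r},E_7,E_8,F_4,G_2$, giving index $0$; $\mathrm{card}(I)=[\frac{r+1}{2}]$ for $A_r$, giving $r-[\frac{r+1}{2}]=[\frac r2]$; $\mathrm{card}(I)=4$ for $E_6$ of rank $6$, giving $2$; and $\mathrm{card}(I)=2r$ for $D_{2r+1}$ of rank $2r+1$, giving $1$. The main obstacle I anticipate is not the final linear algebra but the careful verification that no off-diagonal interaction survives between distinct blocks $\Gamma_K$ and $\Gamma_L$ — precisely the place where part (2) of the $\Gamma_K$-proposition and strong orthogonality must be combined — and, secondarily, pinning down $\mathrm{card}(I)$ in the two families $D_{2r}$ and $D_{2r+1}$, where the parity of the rank changes the count.
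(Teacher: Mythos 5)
Your proposal is correct and takes exactly the route the paper's own machinery dictates: the paper states this theorem without any written proof, and your argument supplies precisely the intended computation — invoke the preceding theorem that $f=\sum_{k\in I(\g)}X^{*}_{\beta_k}$ is regular, use the $\Gamma_K$-proposition and strong orthogonality to show $\ker B_f$ lives in $\mathbf{h}\oplus\bigoplus_k\K X_{\beta_k}$ and has dimension $r-\mathrm{card}(I(\Delta))$, then evaluate $\mathrm{card}(I)$ case by case from the classification theorem. Both the derivation of the formula $i(\mathbf{b})=r-\mathrm{card}(I(\g))$ and the substitutions (including the parity-sensitive counts for $D_{2r}$ versus $D_{2r+1}$) are accurate.
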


\begin{cor}Let $\g$ be a simple Lie algebra. Let $\bold{b}$ a Borel Lie algebra of $\g$. If $\g$ is type $B_r,C_r,D_{2r},E_8,E_7,F_4,G_2$, then $\bold{b}$ is Frobenuis Lie algebra.
\end{cor}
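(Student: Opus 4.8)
The plan is to read the statement off directly from the index computation of the preceding Theorem, so that the corollary is a formal consequence carrying no independent difficulty. That Theorem asserts $i(\mathbf{b}) = 0$ for each of the types $B_r, C_r, D_{2r}, E_8, E_7, F_4, G_2$, while the Definition declares a Lie algebra $\g$ to be Frobenius exactly when its index vanishes. Applying this definition to the Borel subalgebra $\mathbf{b}$, regarded as a Lie algebra in its own right, the equality $i(\mathbf{b}) = 0$ is precisely the assertion that $\mathbf{b}$ is Frobenius. Thus the proof consists in invoking the Theorem and then the Definition.

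For completeness I would recall why the index vanishes in these cases, since that is where the genuine content sits. By the Theorem producing a regular form, the functional $f = \sum_{k \in I(\g)} X^*_{\beta_k}$ lies in $\mathbf{b}^*_{reg}$, so that $i(\mathbf{b}) = \dim(\ker B_f)$ with $B_f(X,Y) = f([X,Y])$. The vanishing of the index is then equivalent to the nondegeneracy of $B_f$ on $\mathbf{b}$. To establish this one uses the maximum set of strongly orthogonal roots $(\beta_k)_{k \in I(\g)}$ from the classification Theorem, together with the disjoint decomposition $\Delta_+ = \bigcup_{K} \Gamma_K$ and the pairing structure of the Proposition on the sets $\Gamma_K$: each $\Gamma_K$ other than $\{\beta_K\}$ breaks into pairs $\gamma, \beta_K - \gamma$ with sum $\beta_K$, so $f$ pairs the corresponding root vectors against one another, while the coroots $H_{\beta_k}$ pair nondegenerately with the $X_{\beta_k}$. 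One then checks that for these types no nonzero vector of $\mathbf{b}$ is annihilated by $B_f$, i.e. the kernel is trivial.

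I would conclude by combining the two observations: the regular form $f$ satisfies $\ker B_f = 0$ for the listed types, hence $i(\mathbf{b}) = 0$, hence $\mathbf{b}$ is Frobenius by definition. The only point requiring care is the verification that the cancellations above leave no residual direction in the kernel, which is exactly what distinguishes these types from $A_r$, $D_{2r+1}$, and $E_6$, where residual directions persist (already forced by the parity of $\dim \mathbf{b}$ in the case $D_{2r+1}$) and force a strictly positive index. At the level of the corollary, however, there is no real obstacle: it is an immediate specialization of the index theorem to the value $0$.
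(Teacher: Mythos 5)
Your proposal is correct and matches the paper's treatment: the corollary is an immediate consequence of the preceding theorem, which gives $i(\mathbf{b})=0$ for the types $B_r, C_r, D_{2r}, E_8, E_7, F_4, G_2$, combined with the definition of a Frobenius Lie algebra as one of index zero. The additional sketch you give of why the index vanishes (the regular form $f=\sum_{k}X^{*}_{\beta_k}$ and the pairing of each $\Gamma_K$ into couples $\gamma, \beta_K-\gamma$) belongs to the proof of that theorem rather than of the corollary, and is consistent with the paper's line of argument.
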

\section{Theorem of Richardson}
Let $\bold{g}$ be a Lie semisimple Lie Algebra. Let $\bold{b}$ a Borel Lie subalgebra of $\g$ and $\bold{n}$ its nilradical. Let $\bold{p}$ a parabolic subalgebra of $\g$ containing $\bold{b}$ and $\bold{m}$ its nilradical. Let $Q^{+}$ a subset of $\Delta_{+}$ such that $\bold{m}=\g^{Q{+}}=\lbrace X_{\alpha}:\alpha\in Q^{+}\rbrace$. Let $F=(\beta_k)_{k\in{I(\Delta)}}$ a maximum set of stronly orthogonal roots. Let $H_L$ the coroot of $\beta_L$, $L\in\Delta$.\\
We set
$$I(Q^{+})=\lbrace L\in I(\Delta_{+}):\beta_L\in Q^{+}\rbrace ,\bold{h}_{I(Q^{+})}=\lbrace H_{L}:L\in I(Q^{+})\rbrace$$ $$\Gamma_L=\lbrace \gamma\in\Delta:(\gamma,\beta_L)> 0\rbrace,\bold{m}_{I(Q^{+})}=\bigoplus_{L\in I(Q^{+})}\g^{\Gamma_L}$$ $$\bold{d}_{\bold{m}}=\bold{h}_{I(Q^{+})}\oplus\bold{m}_{I(Q^{+})}$$ \\

Let $Aut(\g)$ be the group of automorphisms of $\g$. The subgroup of  $Aut(\g)$ genereted by the automorphisms $e^{adX}$, where $X$ is an element of $\g$ such that $ad X$ is nilpotent. This subgroup is denoted by $\bold{G}$. 
\begin{thm}Let $\g$ be a semisimple Lie algebra $\g$, $\bold{G}$ the group of its elementary hommorphismes, $\bold{b}$ a Borel Lie subalgebra of $\g$,  $\bold{p}$ be a parabolic subalgebra of $\g$ containing the Borel subalgebra $\bold{b}$, and $\bold{m}$ its nilradical. We set
$$\bold{P}=\lbrace g\in \bold{G}:g(\bold{p})=\bold{p}\rbrace$$
$$\bold{B}=\lbrace g\in \bold{G}:g(\bold{b}=\bold{b}\rbrace$$
\begin{enumerate}
\item  there is a unique nilpotent $\bold{G}$-orbit $\bold{O}$ in $\g^{*}$ such that 
\begin{enumerate}
\item $\bold {O}\cap\bold{m}^{*}$ is a dense open of $\bold{m}^{*}$ and $\bold {O}\cap\bold{m}^{*}=(\bold{d}_{\bold{m}})_{reg}\cap\bold{O}$
\item $\bold {O}\cap\bold{d}_{\bold{m}^{*}}$ is a dense open of $\bold{d}_{\bold{m}^{*}}$ and $\bold{O}\cap\bold{d}_{\bold{m}^{*}}=(\bold{d}_{\bold{m}})_{reg}$
\end{enumerate}
\item the set $\bold {O}\cap\bold{m}^{*}$ is a $\bold{P}$-orbit in $\bold{m}^{*}$ and the set $\bold {O}\cap\bold{m}^{*}$ is $\bold{B}$-orbit in $\bold{d}_{\bold{m}^{*}}$.
\item If $f\in \bold{O}\cap\bold{d}_{\bold{m}^{*}}$, then $f\circ ad \bold{b}=\bold{d}_{\bold{m}^{*}}$.
\end{enumerate}
\end{thm}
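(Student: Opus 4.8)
The plan is to transport the entire statement to $\g$ itself via the invariant form $(,)$, which identifies $\g\cong\g^*$ and turns the coadjoint action into the adjoint action. Under this identification the coadjoint orbit $\mathbf{O}$ becomes a nilpotent $\mathbf{G}$-orbit in $\g$, the space $\mathfrak{m}^*$ becomes the opposite nilradical $\bigoplus_{\alpha\in Q^+}\g^{-\alpha}$, and $\mathfrak{d}_{\mathfrak{m}}^*$ becomes the graded subspace $\mathfrak{h}_{I(Q^+)}\oplus\bigoplus_{L\in I(Q^+)}\bigoplus_{\gamma\in\Gamma_L}\g^{-\gamma}$. The distinguished element is the image of $f=\sum_{L\in I(Q^+)}X^*_{\beta_L}$, a sum of vectors attached to the strongly orthogonal roots $\beta_L$; note that $f\in\mathfrak{m}^*$ because $I(Q^+)=\{L:\beta_L\in Q^+\}$. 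First I would record, from the theorems already established, that $\mathfrak{d}_{\mathfrak{m}}$ is a Frobenius ideal of $\mathfrak{b}$ for which $f$ is a regular form, so that $B_f$ is nondegenerate on $\mathfrak{d}_{\mathfrak{m}}$; equivalently $\ad^*(\mathfrak{d}_{\mathfrak{m}})f=\mathfrak{d}_{\mathfrak{m}}^*$.

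To obtain existence together with parts 1 and 2, the heart of the matter is the tangent-space identity $\ad^*(\mathfrak{p})f=\mathfrak{m}^*$, which says that $f$ is a Richardson element and that the $\mathbf{P}$-orbit of $f$ is open in $\mathfrak{m}^*$. I would establish it by expanding $\ad^*(X_\gamma)f$ for $\gamma\in\Delta^+$ in the dual basis $\{X^*_\delta\}_{\delta\in Q^+}$ and showing that every covector is hit, using the earlier Proposition on the sets $\Gamma_K$—specifically that for $\gamma\in\Gamma_K\setminus\{\beta_K\}$ one has $\beta_K-\gamma\in\Gamma_K$, and that two roots of $\Gamma_K\setminus\{\beta_K\}$ can sum only to $\beta_K$. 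Since $\mathbf{P}$ is connected and $\mathfrak{m}^*$ irreducible, openness forces density, giving 1(a) and the single $\mathbf{P}$-orbit statement of 2. Setting $\mathbf{O}:=\mathbf{G}\cdot f$ then yields the required nilpotent orbit with $\mathbf{O}\cap\mathfrak{m}^*\supseteq\mathbf{P}\cdot f$ dense open.

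For uniqueness I would use the dimension equality forced by density: any nilpotent orbit meeting $\mathfrak{m}^*$ in a dense subset has dimension $2\dim\mathfrak{m}$, and the collapsing map $\mathbf{G}\times^{\mathbf{P}}\mathfrak{m}\to\g$ is dominant onto $\overline{\mathbf{O}}$ with generically finite fibres, so $\mathbf{O}$ is the only orbit with this property—this is the classical content of Richardson's theorem. For the $\mathfrak{d}_{\mathfrak{m}}$-assertions 1(b) and the $\mathbf{B}$-orbit claim in 2, I would exploit that $\mathfrak{d}_{\mathfrak{m}}$ is Frobenius and an ideal of $\mathfrak{b}$: its regular forms $(\mathfrak{d}_{\mathfrak{m}})^*_{\mathrm{reg}}$ are a Zariski-open dense subset on which $B_f$ is nondegenerate, $\mathbf{B}$ acts on $\mathfrak{d}_{\mathfrak{m}}^*$, and I would identify $(\mathfrak{d}_{\mathfrak{m}})^*_{\mathrm{reg}}=\mathbf{B}\cdot f=\mathbf{O}\cap\mathfrak{d}_{\mathfrak{m}}^*$ by comparing dimensions and $\mathbf{G}$-saturations. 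Part 3, that $f\circ\ad\mathfrak{b}=\mathfrak{d}_{\mathfrak{m}}^*$, is precisely the infinitesimal openness of this $\mathbf{B}$-orbit, i.e. $\ad^*(\mathfrak{b})f=\mathfrak{d}_{\mathfrak{m}}^*$, and follows by promoting the nondegeneracy of $B_f$ from $\mathfrak{d}_{\mathfrak{m}}$ to $\mathfrak{b}$ using that $\mathfrak{d}_{\mathfrak{m}}$ is an ideal.

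The hard part will be the explicit verification that $f$ is a Richardson element, together with the precise matching $\mathbf{O}\cap\mathfrak{m}^*=(\mathfrak{d}_{\mathfrak{m}})^*_{\mathrm{reg}}\cap\mathbf{O}$ as subsets of $\g^*$. This requires careful bookkeeping of which root vectors $X_\gamma$ translate $f$ in each direction $X^*_\delta$ and control of the structure constants $N_{\alpha,\beta}$, so that no unwanted cancellation occurs; the combinatorics of the $\Gamma_K$-decomposition of $\Delta^+$ and the strong orthogonality of the family $(\beta_L)$ are what make these cancellations manageable.
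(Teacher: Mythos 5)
A preliminary caveat: the paper states this theorem with no proof whatsoever (it is presented, per the section title, as Richardson's theorem imported from the literature), so there is no argument of the paper against which to compare yours; what follows assesses your outline on its own terms.

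The architecture you propose is the standard and reasonable one: identify $\g^{*}$ with $\g$ by the invariant form, show that the form $f=\sum_{L\in I(Q^{+})}X^{*}_{\beta_L}$ built on the strongly orthogonal roots has open $\mathbf{P}$-orbit in $\m^{*}$, set $\mathbf{O}=\mathbf{G}\cdot f$, and deduce uniqueness from the fact that two dense constructible subsets of the irreducible space $\m^{*}$ must meet. But as a proof it has genuine gaps exactly where the content of the theorem lies. (i) The central identity, surjectivity of $X\mapsto (f\circ\ad X)|_{\m}$ from $\mathfrak{p}$ onto $\m^{*}$, is only promised (``careful bookkeeping''), never carried out; this computation, resting on the $\Gamma_K$-partition of $\Delta^{+}$ and strong orthogonality of the $\beta_L$, \emph{is} the theorem, and without it there is no orbit $\mathbf{O}$ to speak of. Similarly, regularity of this particular $f$ on $\mathfrak{d}_{\m}$ is not among the paper's earlier results (its Theorem 4 concerns the full family $(\beta_k)_{k\in I(\g)}$ and the form on $\mathfrak{b}$, not the subfamily indexed by $I(Q^{+})$ on $\mathfrak{d}_{\m}$), so it too would have to be proved, not quoted. (ii) Openness of the single orbit $\mathbf{P}\cdot f$ does not yield part 2: one needs that \emph{all} of $\mathbf{O}\cap\m^{*}$ is one $\mathbf{P}$-orbit, which requires showing that every $g\in\mathbf{O}\cap\m^{*}$ has open $\mathbf{P}$-orbit --- via the estimate $\dim\mathfrak{p}-\dim\mathfrak{p}_{g}\geq\dim\mathfrak{p}-\dim\g_{g}=\dim\m$, valid once $\dim\mathbf{O}=2\dim\m$ is established --- and then using that distinct orbits are disjoint while an irreducible space cannot contain two disjoint nonempty open sets. (iii) The same defect is more serious for 1(b) and the $\mathbf{B}$-orbit claim: ``comparing dimensions and $\mathbf{G}$-saturations'' cannot identify $(\mathfrak{d}_{\m}^{*})_{reg}$ with $\mathbf{B}\cdot f$, since distinct orbits of equal dimension are ubiquitous, and a priori the regular set of the Frobenius algebra $\mathfrak{d}_{\m}$ is only a union of open coadjoint orbits rather than a single $\mathbf{B}$-orbit; an actual transitivity argument is missing.

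Two further points. The paper's declared ground field is $\R$, and over $\R$ your inferences break down: an open orbit of a real group need not be dense, there may be several open $\mathbf{P}$-orbits, and the uniqueness statement of Richardson's theorem genuinely fails for real forms; your argument tacitly requires an algebraically closed field, so you must either complexify and descend or correct the hypothesis. Finally, in part 3 nothing needs to be ``promoted from $\mathfrak{d}_{\m}$ to $\mathfrak{b}$'' --- nondegeneracy of $B_f$ on $\mathfrak{b}$ is false in general. The correct and immediate argument is that regularity of $f$ on the Frobenius algebra $\mathfrak{d}_{\m}$ gives $(f\circ\ad\mathfrak{d}_{\m})|_{\mathfrak{d}_{\m}}=\mathfrak{d}_{\m}^{*}$, and the inclusion $\mathfrak{d}_{\m}\subseteq\mathfrak{b}$ then yields the asserted surjectivity of $f\circ\ad\mathfrak{b}$ onto $\mathfrak{d}_{\m}^{*}$ at once.
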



\end{document}